\documentclass[a4paper,11pt,leqno]{article}

\usepackage[latin1]{inputenc}
\usepackage[T1]{fontenc} 
\usepackage[english]{babel}
\usepackage{verbatim}
\usepackage{calligra}
\usepackage{enumerate}
\usepackage{dsfont}
\usepackage{amsfonts}
\usepackage{amsmath}
\usepackage{amsthm}
\usepackage{amssymb}
\usepackage{mathtools}
\usepackage{mathrsfs}
\usepackage{color}
\usepackage{graphicx}
\usepackage{bm}
\usepackage{srcltx}

\usepackage{tikz}
\usepackage[retainorgcmds]{IEEEtrantools}

\usepackage{graphicx}		
\usepackage[hypcap=false]{caption}

\theoremstyle{definition}
\newtheorem{defin}{Definition}[section]

\theoremstyle{plane}
\newtheorem{thm}[defin]{Theorem}
\newtheorem{prop}[defin]{Proposition}
\newtheorem{cor}[defin]{Corollary}
\newtheorem{lemma}[defin]{Lemma}

\newcommand{\tbf}{\textbf}

\newcommand{\tsl}{\textsl}

\newcommand{\mc}{\mathcal}
\newcommand{\mf}{\mathfrak}

\newcommand{\wtilde}{\widetilde}
\newcommand{\vphi}{\varphi}

\renewcommand{\k}{\kappa}

\renewcommand{\t}{\tau}

\newcommand{\de}{\delta}
\renewcommand{\o}{\omega}

\newcommand{\lan}{\langle}
\newcommand{\ran}{\rangle}

\newcommand{\R}{\mathbb{R}}

\newcommand{\N}{\mathbb{N}}

\newcommand{\T}{\mathbb{T}}

\renewcommand{\div}{{\rm div}\,}
\newcommand{\divh}{{\rm div}_{\!h}}

\newcommand{\curl}{{\rm curl}\,}

\newcommand{\dx}{ \, {\rm d} x}
\newcommand{\dt}{ \, {\rm d} t}

\newcommand{\loc}{{\rm loc}}

\allowdisplaybreaks

\def\d{\partial}
\def\div{{\rm div}\,}
\newcommand{\dd}{{\rm d}}

\begin{document}

\title{\textsc{\Large{\textbf{Global Yudovich-type solutions to a reduced model for micropolar fluids with zero viscosity}}}}

\author{\normalsize \textsl{Francesco Fanelli}$\,^{1,2,3} \qquad$ and $\qquad$
\textsl{Pedro Gabriel Fern\'andez-Dalgo}$\,^{1}$
\vspace{.5cm} \\
\footnotesize{$\,^{1}\;$ \textsc{BCAM -- Basque Center for Applied Mathematics}} \\ 
{\footnotesize Alameda de Mazarredo 14, E-48009 Bilbao, Basque Country, SPAIN} \vspace{.2cm} \\
\footnotesize{$\,^{2}\;$ \textsc{Ikerbasque -- Basque Foundation for Science}} \\  
{\footnotesize Plaza Euskadi 5, E-48009 Bilbao, Basque Country, SPAIN} \vspace{.2cm} \\
\footnotesize{$\,^{3}\;$ \textsc{Universit\'e Claude Bernard Lyon 1}, {\it Institut Camille Jordan -- UMR 5208}} \\ 
{\footnotesize 43 blvd. du 11 novembre 1918, F-69622 Villeurbanne cedex, FRANCE} \vspace{.3cm} \\
%
%
\footnotesize{Email addresses: $\;$ \ttfamily{ffanelli@bcamath.org}}, $\;$
\footnotesize{\ttfamily{pfernandez@bcamath.org}}
\vspace{.2cm}
}

\date\today

\maketitle

\subsubsection*{Abstract}
{\footnotesize 

In this paper, we study the well-posedeness at low regularity of a two-dimensional system obtained as a reduced model for micropolar fluid dynamics.
At the mathematical level, the system presents a coupling between an Euler-type equation for the two-dimensional velocity field of the fluid and an
advection-diffusion equation for the scalar microrotation field.

For this model, we prove global existence and uniqueness of Yudovich-type solutions, namely weak solutions for which the vorticity is only bounded (with some
additional integrability property) and the microrotation field remains bounded and of finite energy.
To the best of our knowledge, this is the first result which extends the genuine Yudovich framework to a system obtained by perturbing the incompressible Euler equations
with some sort of heterogeneity.

}

\paragraph*{\small 2020 Mathematics Subject Classification:}{\footnotesize 35Q35 
(primary);
35R05, 
76B47, 
35D30 
(secondary).}

\paragraph*{\small Keywords: }{\footnotesize micropolar fluids; zero viscosity; Yudovich solutions; global existence; uniqueness.
}

%
%

\section{Introduction} \label{s:intro}

In this paper, we are interested in the well-posedness theory of a reduced, two-dimensional model
describing the dynamics of micropolar fluids (see Eringen \cite{Erin}).
Denote by $u=(u^1,u^2)\in\R^2$ the velocity field of the fluid, $\Pi\in\R$ its pressure field and by $m\in\R$ a scalar quantity representing
its microrotation field; we also define $\o$ to be the vorticity of the fluid, given by the formula
\[
 \o\,:=\,\curl(u)\,=\,\d_1u^2\,-\,\d_2u^1\,. 
\]

The equations we will be considering throughout this work read as follows:
\begin{equation} \label{eq:2D-microp}
\left\{\begin{array}{l}
        \d_tu\,+\,(u\cdot\nabla)u\,+\,\nabla\Pi\,=\,-\,\alpha\,\nabla^\perp m \\[1ex]
        \d_tm\,+\,u\cdot\nabla m\,-\,\k\,\Delta m\,=\,\alpha\,\o \\[1ex]
        \div u\,=\,0\,.
       \end{array}
\right.
\end{equation}
where the constants $\alpha>0$ and $\k>0$ are both assumed positive and where we have used the notation
$\nabla^\perp m\,:=\,(-\d_2m , \d_1m)$.

We set system \eqref{eq:2D-microp} on
\[
 (t,x)\,\in\,\R_+\times\Omega\,,\qquad\qquad \mbox{ with }\qquad \Omega\,=\,\R^2\ \mbox{ or }\ \T^2\,,
\]
and we supplement it with the initial condition
\[
 \big(u,m\big)_{|t=0}\,=\,\big(u_0,m_0\big)\,.
\]
Suitable assumptions on the two-dimensional vector field $u_0$ and on the scalar field $m_0$ will be specified later on.

The goal of the present paper is to set down a weak solutions theory \tsl{\`a la Yudovich} for equations \eqref{eq:2D-microp}. Before presenting our main result,
which will be formulated in Subsection \ref{ss:results} below, let us spend a few words on the derivation of equations \eqref{eq:2D-microp}.

\subsection{Micropolar fluids} \label{ss:derivation}
The micropolar fluid equations were introduced by Eringen \cite{Erin} in the late 1960s as a continuum model for fluids with internal microstructure,
allowing each material point to undergo independent rigid micro-rotations in addition to the usual translational motion.
This theory modifies the classical Navier-Stokes equations by incorporating an additional vector field describing the microrotation of fluid particles and by introducing
new constitutive parameters accounting for micro-inertia and rotational viscosity effects.

The dynamics of a general, three-dimensional, homogeneous micropolar fluid can be described through evolution equations on its velocity field $U\in\R^3$,
its pressure field $\pi\in\R$ and its microrotation field $M\in\R^3$.
The equations of motion take the following form (see \tsl{e.g.} Part I, Chapter 1, Section 3 of \cite{Luk} for details):
\begin{equation} \label{eq:microp}
\left\{\begin{array}{l}
        \d_tU\,+\,(U\cdot\nabla)U\,+\,\nabla\pi\,-\,(\mu+\mu_r)\Delta U\,=\,2\,\mu_r\,\curl M \\[1ex]
        \d_tM\,+\,(U\cdot\nabla)M\,-\,(c_a+c_d)\,\Delta M\,-\,(c_0+c_d-c_a)\,\nabla\div M\,+\,4\,\mu_r\,M\,=\,2\,\mu_r\,\curl U \\[1ex]
        \div U\,=\,0\,.
       \end{array}
\right.
\end{equation}
Despite we use the same notation as in system \eqref{eq:2D-microp}, notice that, in system \eqref{eq:microp}, all the space-differentiation operators are three-dimensional.
The coefficients appearing therein have the following meaning:
\begin{itemize}
 \item $\mu\geq 0$ represents the dynamic viscosity coefficient of the fluid;
 \item $\mu_r\geq0$ denotes the dynamic microrotation viscosity;
 \item $c_0$, $c_d$ and $c_a$ are the so-called coefficients of angular viscosities.
\end{itemize}

From a modeling viewpoint, the micropolar system provides a refined description of \emph{complex fluids with microstructure}, including suspensions,
liquid crystals, and fluids with rigid or semi-rigid particles. The classical velocity field $u$ represents the transport of the continuum, namely the
coarse-level advection observable at the fluid scale, while the intrinsic microrotation $M$ encodes the internal spinning of micro-elements,
which remains invisible at the classical level.
For a detailed discussion about the physical deduction of this model and a physical interpretation of the constants, we refer to \cite{Luk}.

After the mathematical formulation of Eringen \cite{Erin}, foundational existence and uniqueness results for weak and strong solutions in various viscous settings were soon developed.
Classical existence theorems of finite energy weak solutions, analogous to those available for the Navier-Stokes equations, were established by Galdi and Rionero \cite{GaldiRionero1977}. Complementary approaches using functional frameworks such as critical Besov spaces were later pursued by Chen \cite{Chen2012}, who proved global well-posedness of the micropolar system in scaling-critical norms. Other lines of research have investigated regularity and uniqueness properties under specific structural assumptions on viscosities and boundary conditions \cite{YangWang2019}.
These works highlight the rich variety of analytical methods applicable to micropolar models and the nontrivial coupling between the velocity and
microrotation fields.

Despite the mathematical formulation of the micropolar fluid equations \eqref{eq:microp} looks as an extension of the incompressible Navier-Stokes equations,
some words of caution are in order. 
%
As a matter of fact, setting $M\equiv 0$ in \eqref{eq:microp} does not allow to recover the Navier-Stokes system, but instead enforces the condition
$\curl U=0$, which, together with the incompressibility constraints $\div U=0$, yields the triviality of any ``reasonable'' solution $U\equiv0$.
The micropolar system should therefore \emph{not} be regarded as a generalization of the Navier--Stokes equations, inasmuch as it encodes a fundamentally different structure.

\subsection{Derivation of the reduced model} \label{ss:reduced-deriv}

System \eqref{eq:2D-microp} is obtained from the general micropolar fluid equations \eqref{eq:microp} by performing the following reductions:
\begin{enumerate}[(i)]
 \item first of all, after decomposing vectors $A\in\R^3$ as $A=(A_h,A^3)$, with $A_h=(A^1,A^2)\in\R^2$ and $A^3\in\R$,
we restrict our attention to special solutions of the form
\[
 U\,=\,\big(u_h(t,x_h) , 0\big)\qquad \mbox{ and }\qquad M\,=\,\big(0,0,m(t,x_h)\big)\,,\qquad\qquad \mbox{ with }\qquad \divh u_h=0\,,
\]
where we have denoted $\divh$ to be the two-dimensional divergence of a 2-D vector field, that is $\divh u_h\,=\,\d_1u^1\,+\,\d_2u^2$;
observe in particular that, with this choice of $U$ and $M$, one has $\div M=0$, $\curl M = ( \d_2m, -\d_1 m, 0)$ and $\curl U = (0,0, \o)$;
\item we define $\k := c_d + c_a$ and $\alpha = 2\,\mu_r$, and we take $u = u_h$;
\item we arbitrarily take $\mu + \mu_r =0$ in the viscosity term $\Delta U$ appearing in \eqref{eq:microp};
\item in an equally arbitrarily way, we erase the damping term $+\,4\,\mu_r\,M$ appearing on the left-hand side of the equation for $M$.
\end{enumerate}
The last items (iii) and (iv) are in fact unphysical, as the former obviously violates the principles $\mu\geq0$ and $\mu_r\geq0$ stated above, whereas the latter
is not consistent with the fact that we keep the term $2\,\mu_r\,\curl U$ on the right-hand side of the second equation.
Nonetheless, our motivation here is to look for some sort of stability of previous well-posedness results for \eqref{eq:microp} with respect to viscosity.
It should be noted that the answer to this question does not appear, at least at a first sight, completely obvious.
Indeed, in absence of viscous dissipation
and damping, the energy balance for equations \eqref{eq:microp} highlighted in \cite{GaldiRionero1977} breaks down.
More precisely, when performing an energy estimate (that is, multiplying in $L^2$ the first equation by $U$ and and the second equation by $M$),
the contribution of the right-hand sides of the first and second equations in \eqref{eq:microp} does not sum up to $0$; on the contrary,
they give rise to two terms of the same sign. Viscosity and damping together have (to some extent) a stabilising effect, as their contribution serves precisely
to counterbalance the energy production coming from the terms $\curl U$ and $\curl M$.

The reduced model \eqref{eq:2D-microp} proposed in this article suffers of the same pathological structure: since we neglect both
viscosity and damping terms there, the energy has an exponential growth, which may then be a source of instability.
Of course, system \eqref{eq:2D-microp} already posseses a dissipation mechanism on $m$, through the microrotation
viscosity term $-\k\Delta m$: as far as regularity is concerned, leveraging this effect is enough to compensate for the loss of one derivative due to the presence
of the term $-\alpha\nabla^\perp m$ appearing in the first equation of \eqref{eq:2D-microp}. Yet, this may \tsl{a priori} not be enough to propagate
regularity for long times and recover global well-posedness of the system.

To conclude this part, we point out that a different two-dimensional reduced model for micropolar fluids was recently studied in \cite{BL-C-S}. In that model,
viscosity was considered, 
whereas the equation of the variable $m$ (called $b$ in that paper) presented no diffusion, but only damping.
The gain of regularity on the velocity field was used to compensate the loss of derivatives introduced by the presence of the forcing term
$-\alpha\nabla^\perp m$ on the right-hand side of the equation for $u$, and damping together with heat kernel estimates enabled the authors to get time decay.
In our paper, instead, the regularity mechanism works the other way around (we gain with respect to $m$, to compensate the loss coming from $\alpha \o$ in its equation);
in addition, our analysis differs from \cite{BL-C-S} both concerning the proof of existence (we use here a different regularisation procedure) and uniqueness (our
approach is mainly inspired by the original argument of Yudovich \cite{Yud_1963}).
It is fair to mention that, however, the well-posedness result of \cite{BL-C-S} holds true for less regular initial data then ours.

%
%
%

\subsection{Statement of the main result} \label{ss:results}

In the present paper, we address the question of well-posedness of the reduced micropolar fluid system \eqref{eq:2D-microp}. Our goal is to construct
\emph{global solutions} in a weak regularity framework, which mimics the \emph{Yudovich theory} \cite{Yud_1963} for the classical incompressible Euler equations
(see \tsl{e.g.} the monographs \cite{Maj-Bert} and \cite{BCD} for more details).

Before presenting the precise statement, a few definitions are in order. First of all, given a two-dimensional vector field $v$ over $\R^2$, we define
the scalar field
\[
 \curl(v)\,:=\,\d_1v^2\,-\,\d_2v^1
\]
to be the two-dimensional ``vorticity'' associated to $v$.
Next, we recall the definition of log-Lipschitz functions over $\R^d$ (here $d=2$, the only case of interest for us).

\begin{defin} \label{def:LL}
A function $f\in L^\infty(\R^d)$ is said to be \emph{log-Lipschitz} continuous, and we write $f\in LL(\R^d)$, if the quantity
\[
|f|_{LL}\,:=\,\sup_{z,y\in\R^d,\,0<|y|<1}
\left(\frac{\left|f(z+y)\,-\,f(z)\right|}{|y|\,\log\left(1\,+\,\frac{1}{|y|}\right)}\right)\,<\,+\infty\,.
\]
We define the log-Lipschitz norm as $\|f\|_{LL}\,:=\,\|f\|_{L^\infty}\,+\,|f|_{LL}$.
\end{defin}

Recall that vector fields which are $L^1_T(LL)$ admit a uniquely defined flow, see \tsl{e.g.} Chapter 3 of \cite{BCD} for details.

After these premises, we are ready to present the main result of this paper, which is contained in the following statement.

\begin{thm} \label{th:global-wp}
Let $p_0\,\in\,\,]1,2[\,$ fixed. Take an initial datum $\big(u_0,m_0\big)$, with $\div u_0=0$, such that, after defining the initial vorticity
$\o_0\,:=\,\curl(u_0)\,=\,\d_1u_0^2-\d_2u_0^1$, one has 
\[
 \o_0\,,\,m_0\;\in\,L^{p_0}(\R^2)\cap L^\infty(\R^2)\,.
\]

 Then there exists a unique global in time solution $\big(u,m\big)$ to system \eqref{eq:2D-microp}, related to the initial datum $\big(u_0,m_0\big)$ and such
that
\[
 \o\,,\,m\;\in\,L^\infty_\loc\big(\R_+;L^{p_0}(\R^2)\cap L^\infty(\R^2)\big)\qquad \mbox{ and }\qquad 
 \nabla m\,\in\,L^2_\loc\big(\R_+;L^{2}(\R^2)\big)\,,
\]
where we have defined $\o\,:=\,\curl(u)\,=\,\d_1u^2-\d_2u^1$ the vorticity of the fluid. In addition, after setting $q_0\,\in\,\,]2,+\infty[\,$
such that $1/q_0\,=\,1/p_0\,-\,1/2$, one has 
\[
 u\,\in\,L^\infty_\loc\big(\R_+;L^{q_0}(\R^2)\cap LL(\R^2)\big)\,.
\]
\end{thm}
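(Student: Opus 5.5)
The argument will follow the classical Yudovich scheme: a priori estimates, existence by compactness, and uniqueness by an energy argument with $L^p$ bounds that grow like $p$. The system is set in vorticity form:
\[
\d_t\o+u\cdot\nabla\o=\alpha\,\Delta m\,,\qquad \d_tm+u\cdot\nabla m-\k\Delta m=\alpha\,\o\,,
\]
using $\curl(\nabla^\perp m)=\Delta m$. The apparent loss of two derivatives on $\o$ is eliminated by introducing a good unknown. Set $\Gamma:=\o+\frac{\alpha}{\k}m$; then
\[
\d_t\Gamma+u\cdot\nabla\Gamma=\frac{\alpha^2}{\k}\,\o=\frac{\alpha^2}{\k}\Big(\Gamma-\frac{\alpha}{\k}m\Big)\,,
\]
which is a pure transport equation with a zero-order source. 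Meanwhile $m$ solves
\[
\d_tm+u\cdot\nabla m-\k\Delta m+\frac{\alpha^2}{\k}m=\alpha\,\Gamma\,,
\]
a damped transport-diffusion equation.

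For every $p\in[p_0,\infty]$, $L^p$ estimates on both equations use only $\div u=0$, so the transport terms drop out and $\langle -\Delta m,|m|^{p-2}m\rangle\ge0$. They give
\[
\|\Gamma(t)\|_{L^p}+\|m(t)\|_{L^p}\le \big(\|\Gamma_0\|_{L^p}+\|m_0\|_{L^p}\big)\,e^{Ct}\,,
\]
with $C$ depending on $\alpha,\k$ only, uniformly in $p$. The $L^2$ energy estimate for $m$ gives $\nabla m\in L^2_{\rm loc}(L^2)$, using $m\in L^{p_0}\cap L^\infty\subset L^2$. Hence $\o=\Gamma-\frac{\alpha}{\k}m\in L^\infty_{\rm loc}(L^{p_0}\cap L^\infty)$. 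Biot--Savart and Hardy--Littlewood--Sobolev then give $u\in L^{q_0}$, and the standard estimate $\|u\|_{LL}\lesssim\|\o\|_{L^{p_0}\cap L^\infty}$ gives the log-Lipschitz bound.

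For existence, I would mollify the data and solve the regularized system. For smooth data, local well-posedness is standard (parabolic for $m$, transport for $\Gamma$). Global continuation follows from a Beale--Kato--Majda type criterion: $\o$ bounded in $L^\infty$ controls the velocity in $LL$, and parabolic smoothing of $m$ controls higher norms. The uniform bounds above then give weak-$*$ compactness of $(\o_n,m_n)$. Strong compactness of $u_n$ in $L^2_{\rm loc}$ follows from Aubin--Lions: $\d_tu_n$ is bounded in a negative Sobolev space and $\nabla u_n$ in $L^{p_0}\cap L^{q}$. Strong compactness of $m_n$ follows from the $H^1$ bound. This suffices to pass to the limit in the nonlinear terms $u\otimes u$ and $um$.

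Uniqueness is the main obstacle, and I would follow Yudovich. Take two solutions and set $\delta u=u_1-u_2$, $\delta m=m_1-m_2$. The energy estimate in $L^2$ gives
\[
\frac12\frac{\dd}{\dt}\big(\|\delta u\|_{L^2}^2+\|\delta m\|_{L^2}^2\big)+\k\|\nabla\delta m\|_{L^2}^2=-\int\delta u\cdot\nabla u_2\cdot\delta u-\int\delta u\cdot\nabla m_2\,\delta m+\text{coupling}\,.
\]
The coupling terms $-\alpha\int\nabla^\perp\delta m\cdot\delta u+\alpha\int\delta\o\,\delta m$ are equal after integration by parts, giving $2\alpha\int\nabla^\perp\delta m\cdot\delta u$. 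This is absorbed by $\frac{\k}{2}\|\nabla\delta m\|^2_{L^2}+C\|\delta u\|^2_{L^2}$.

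The first remaining term is handled as in Yudovich: Hölder gives $\|\nabla u_2\|_{L^p}\|\delta u\|_{L^{2p'}}^2$, with $\|\nabla u_2\|_{L^p}\le Cp$ by Calder\'on--Zygmund, and interpolation with $\delta u\in L^\infty$ yields a bound $Cp\,E^{1-1/p}$. For the second term, I would write $\int\delta u\,\delta m\cdot\nabla m_2=-\int m_2\,\delta u\cdot\nabla\delta m$, using $\div\delta u=0$. This is bounded by $\|m_2\|_{L^\infty}\|\delta u\|_{L^2}\|\nabla\delta m\|_{L^2}$ and absorbed. One gets $E'\le CE+Cp\,E^{1-1/p}$. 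Letting $p\to\infty$ under a short-time smallness assumption forces $E\equiv0$, and iterating in time gives uniqueness on $\R_+$.

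A delicate point is that $\delta u$ need not be in $L^2$ when $u_0\notin L^2$ on $\R^2$. I would handle this by noting that $\delta u(t)=\int_0^t\d_t\delta u$, with the time derivative in $L^2$ from the equation. Alternatively one can work with the difference of Biot--Savart fields, whose vorticity is in $L^{p_0}$ with $p_0<2$.
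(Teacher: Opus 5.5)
Your proposal is correct and has the same overall structure as the paper: the good unknown $\Gamma$ (yours is $\alpha$ times the paper's), Gr\"onwall bounds for $(\Gamma,m)$ in $L^{p_0}\cap L^\infty$, compactness of solutions with mollified data, and the Yudovich energy argument on $(\de u,\de m)$. The energy argument includes the same absorption of the coupling term and the same rewriting $\int\de u\cdot\nabla m_2\,\de m=-\int m_2\,\de u\cdot\nabla\de m$.

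Two steps are handled differently.

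\textbf{Global smooth approximate solutions.} The paper simply invokes the strong-solution theorem of \cite{F-FD-MM}. Your route, local existence plus a Beale--Kato--Majda continuation, is self-contained in principle, but that is where the real work lies, and your sketch does not do it. The logarithmic estimate has to be closed on $(\Gamma,m)$, e.g.\ propagating H\"older norms of $\Gamma$ by transport and of $m$ by parabolic smoothing, because $\o$ itself loses two derivatives through $\Delta m$.

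\textbf{The property $\de u\in W^{1,2}([0,T];L^2)$.} The paper regards this as the delicate point. In Lemma~\ref{l:energy} it uses maximal parabolic regularity for $\de m$ (hence the assumption $\de m_0\in H^1$), solves a linear transport problem for $\de u$ in $L^2\cap L^{q_0}$, and identifies that solution with $\de u$ via DiPerna--Lions uniqueness. Your time-integration argument is much shorter and is valid. For each solution separately,
$\d_tu_j=-(u_j\cdot\nabla)u_j-\nabla\Pi_j-\alpha\nabla^\perp m_j\in L^2_T(L^2)$, because:
\begin{itemize}
\item $u_j\in L^\infty$ and $\nabla u_j\in L^2$ (as $\o_j\in L^2$);
\item $\nabla\Pi_j\in L^{p_0}\cap L^\infty$ by Lemma~\ref{l:pressure};
\item $\nabla m_j\in L^2_T(L^2)$.
\end{itemize}
This is exactly the paper's own Lemma~\ref{l:d_tu}. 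Hence $\de u(t)=\de u_0+\int_0^t\d_t\de u\,\dd\tau$ lies in $W^{1,2}([0,T];L^2)$ whenever $\de u_0\in L^2$. The energy identity then follows directly, with no transport theory and no extra regularity on $\de m_0$. You should state explicitly that the $L^2$ control of the pressure gradient is what makes $\d_tu_j$ square integrable.

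Two points to fix.
\begin{itemize}
\item Since $\curl(\nabla^\perp m)=\Delta m$ and the forcing is $-\alpha\nabla^\perp m$, the vorticity equation is $\d_t\o+u\cdot\nabla\o=-\alpha\Delta m$. With the sign you wrote, $\o+\frac{\alpha}{\k}m$ would leave $2\alpha\Delta m$ instead of cancelling it. Your equation for $\Gamma$ is the right one for the true sign. The coupling term in the energy estimate is $-2\alpha\int\nabla^\perp\de m\cdot\de u$; this sign slip is harmless.
\item Your alternative justification does not work. Biot--Savart applied to $\de\o\in L^{p_0}\cap L^\infty$ only gives $\de u\in L^{q_0}\cap L^\infty$ with $q_0>2$, by Hardy--Littlewood--Sobolev, not $\de u\in L^2$. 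Square integrability of $\de u$ amounts to $\de\o\in\dot H^{-1}$, which is precisely what the time-integration argument supplies.
\end{itemize}
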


To the best of our knowledge, this is the first result proving (global) existence and uniqueness of Yudovich-type solutions for a sort of non-homogeneous perturbation of
the incompressible Euler equations. As a matter of fact, previous results for \tsl{e.g.} the ideal MHD equations \cite{Hmidi}, the inviscid and non-diffusive Boussinesq
equations \cite{Hass-Hm} and the non-homogeneous incompressible Euler equations \cite{F_2012} prove (local) well-posedness in a subclass
of Yudovich data, more precisely the one obtained by imposing conormal regularity assumptions, in spirit of the pioneering work by J.-Y. Chemin
about regularity of vortex patches \cite{Ch_1991, Ch_1993}.

It must be noted that global well-posedness results in a low regularity framework exist also in the context of the inviscid Boussinesq system in presence of
temperature diffusion, see \tsl{e.g.} \cite{Hmidi-Zer, Paicu-Zhu, Hm-Hou-Z}. However, we remark that the regularity required on the initial data and the one propagated
for the corresponding solution always places at a slightly higher level than the one prescribed by the analogue of Yudovich theory. In addition,
we point out that the system considered in those papers, even though possessing a very similar structure as ours \eqref{eq:2D-microp},
is less singular, inasmuch as the right-hand side of the momentum equation (that is, the equation for the velocity field $u$) present no derivatives acting on
$m$ and, at the same time, there is no forcing term appearing on the right-hand side of the equation for $m$.

Finally, let us mention that, in \cite{H-K-R}, Hmidi, Keraani and Rousset proved global well-posedness for an inviscid Boussinesq system with critical diffusion.
At the level of the structure of the equations and of the order of the operators involved, their system is very much related to the equations considered here.
In fact, we will borrow from that paper a crucial idea in order to propagate the $L^\infty$ norm of the vorticity (see more details in Subsection \ref{ss:leb} below).
However, owing to the non-locality of the operators involved in their computations, the results of \cite{H-K-R} are formulated in a (low regularity, yet) finer setting
than the pure Yudovich class. Of course, here considering the genuine Yudovich framework is made possible, thanks to the locality (and linearity) of the operators involved on
the right-hand side of our equations. At the same time, this makes less clear how to get global well-posedness in spaces constructed over $L^\infty$: this question
will be addressed in the forthcoming paper \cite{F-FD-MM}.

%

\subsection*{Organisation of the paper}

After this introduction, we now give an overview of the rest of the paper.

In Section \ref{s:a-priori}, we show \tsl{a priori} bounds for smooth solutions of the reduced micropolar fluid system \eqref{eq:2D-microp}: we recall
some basic facts about the Biot-Savart law and, after that, we exhibit global in time bounds for the Lebesgue norms of the vorticity $\o$ and the microrotation field
$m$.

In Section \ref{s:existence}, we give the rigorous proof of existence of Yudovich-type solutions to system \eqref{eq:2D-microp}. The argument relies on solving
the original equations for smoothed initial data and on applying a global well-posedness result of strong solutions from \cite{F-FD-MM}.
Convergence of the sequence of regular solutions towards a true Yudovich solution is performed by a compactness argument.

Finally, in Section \ref{s:uniqueness} we prove uniqueness of the constructed solution in the Yudovich class. The proof is inspired by the original work
of Yudovich \cite{Yud_1963}, which can be carried out also in our setting with minor modifications. However, an important point in this argument consists
in computing an energy estimate for the difference of velocity fields: justifying that this is possible in our framework (remark that $u$ does not belong
to $L^2$, in general) is a crucial and delicate point of our work.

\subsection*{Notation} 

Let us fix some notation which will be freely used throughout this paper.

Given a Banach space $\mf B$ over $\R^2$, we will adopt the same notation $\mf B(\R^2)$ (or simply $\mf B$) for scalar, vector-valued and matrix valued functions.
Typically, we will resort to the longer notation $\mf B(\R^2)$ when formulating the assumptions and the statements, and in
important formulas.

For an interval $I\subset \R$ and $\mf B$ as above,
we denote by $\mc C\big(I;\mf B\big)$ the space of continuous bounded functions on $I$ with values in $\mf B$. For any $p\in[1,+\infty]$,
the symbol $L^p\big(I;\mf B\big)$ stands for the space of measurable functions on $I$ such that the map $t\mapsto \left\|f(t)\right\|_{\mf B}$ belongs to $L^p(I)$.
When $I=[0,T]$, we will often use the shorten notation $L^r_T(\mf B)\,=\,L^r\big([0,T];\mf B\big)$, whereas we will resort to the full notation
in statements and centered formulas.
We will denote the space of test-functions over some set $Q$ equivalently by $\mc C^\infty_0(Q)$ or by $\mc D(Q)$, where $Q$ for us
will typically be either $\R^2$ or $[0,T]\times\R^2$. In the latter case, it is understood that the test function vanishes at time $t=T$.

In our estimates we will often avoid to write the explicit multiplicative constants which allow to pass from one line to the other. Thus, we will write
$A\,\lesssim\, B$ meaning that there exists a universal constant $C>0$, not depending on the solutions nor on the data (in the latter case, this will be pointed out),
such that $A\,\leq\,C\,B$.



Given a two-dimensional vector field $v$, we define $v^\perp$ to be its rotation of angle $\pi/2$. More precisely, if $v\,=\,\big(v^1,v^2\big)$, then
$v^\perp\,=\,\big(-v^2,v^1\big)$. Similarly, we define the operator $\nabla^\perp$ as $\nabla^\perp\,=\,\big(-\d_2,\d_1\big)$.
Finally, given a two-dimensional vector field $v\,=\,\big(v^1,v^2\big)$, we define its $\curl$ as
$\curl(v)\,=\,\d_1v^2\,-\,\d_2v^1$.

\section*{Acknowledgements}

{\small

This work has been partially supported 
by the Basque Government through the BERC 2022-2025 program and by the Spanish State Research Agency through the BCAM Severo Ochoa excellence accreditation
CEX2021-001142.

The first author also aknowledges the support of the French National Research Agency (ANR) through the project CRISIS (ANR-20-CE40-0020-01).
This work has been co-funded by the European Union through the project HORIZON-MSCA-2022-COFUND-01-SmartBRAIN3-101126600.

}

\section{\tsl{A priori} estimates} \label{s:a-priori}

In this section, we derive \tsl{a priori} estimates for (supposed to exist) smooth solutions to equations \eqref{eq:2D-microp}. Actually, in order to define
solutions in a Yudovich framework, it is convenient to reformulate the system in terms of the vorticity of the fluid, rather than on the velocity field $u$
itself: this is the goal of Subsection \ref{ss:vort}, devoted to the statement of important static estimates which come from the Biot-Savart law, and
Subsection \ref{ss:vort-form}, where we prove the equivalence between the vorticity formulation and the original formulation of the equations.
Then, in Subsection \ref{ss:leb}, we introduce the auxiliary variable $\Gamma$ and prove global $L^p$ bounds for the couple $\big(\Gamma,m\big)$, which in turn
yield bounds also for the vorticity $\o$.

\subsection{The vorticity and the Biot-Savart law} \label{ss:vort}

Let $\big(u,m\big)$ be a regular solution to the reduced micropolar fluid equations \eqref{eq:2D-microp}, and let
\[
 \o\,:=\,\curl(u)\,=\,\d_1u^2\,-\,\d_2u^1
\]
be the vorticity of the fluid. Since $u$ is divergence-free, it can be recast in terms of $\o$ by solving the \emph{Biot-Savart law},
which in two space dimensions takes the form
\begin{equation} \label{eq:BS}
u\,=\,-\,\nabla^\perp(-\Delta)^{-1}\o\,=\,\frac{1}{2\pi}\,\int_{\R^2}\dfrac{1}{|x-y|^2}\,(x-y)^\perp\,\o(y)\,\dd y\,. 
\end{equation}

From the Biot-Savart law, one can deduce the following classical estimates. For the sake of completeness,
we sketch here their proof; we refer to \tsl{e.g.} Chapter 8 of \cite{Maj-Bert} for more details.

\begin{prop} \label{p:BS}
Let $p_0\,\in\,\,]1,2[\,$ and define $q_0\,\in\,\,]2,+\infty[$ as
\[
\frac1{q_0}\,:=\,\frac1{p_0}\,-\,\frac12\,.
\]
Assume that the vorticity $\omega$ satisfies
$\omega\in L^{p_0}(\mathbb R^2)\cap L^\infty(\mathbb R^2)$,
and let $u$ be the associated velocity field given by the Biot-Savart law \eqref{eq:BS}.

Then, the following properties hold true:
\begin{enumerate}
\item[\rm(i)]
$u\in L^{q_0}(\R^2)\cap L^\infty(\R^2)$; moreover, there exists a universal constant $C>0$ such that
\[
\|u\|_{L^{q_0}}\,\leq\,C\,\|\omega\|_{L^{p_0}} \qquad \mbox{ and }\qquad
\|u\|_{L^\infty}\,\leq\,C\,\big(\|\omega\|_{L^{p_0}}\,+\,\|\o\|_{L^\infty}\big)\,;
\]
\item[\rm(ii)]
the velocity field $u$ is log-Lipschitz continuous, in the sense of Definition \ref{def:LL}, and there exists
$C>0$ such that
\[
|u|_{LL}\,\leq\,C\,\big(\|\omega\|_{L^{p_0}}\,+\,\|\o\|_{L^\infty}\big)\,. 
\]
\end{enumerate}
\end{prop}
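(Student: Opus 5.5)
The proof relies only on the explicit kernel representation \eqref{eq:BS}, namely $u = K*\o$ with $K(z)=\frac{1}{2\pi}\,z^\perp/|z|^2$, so that $|K(z)|\lesssim |z|^{-1}$ and $|\nabla K(z)|\lesssim |z|^{-2}$. For the $L^{q_0}$ bound in (i), we invoke the Hardy--Littlewood--Sobolev inequality. The kernel $|z|^{-1}$ is a Riesz potential of order one in dimension two, hence it maps $L^{p_0}$ into $L^{q_0}$ precisely when $1/q_0 = 1/p_0 - 1/2$ and $1<p_0<2$. This gives $\|u\|_{L^{q_0}}\le C\|\o\|_{L^{p_0}}$.

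For the $L^\infty$ bound, we fix $x$ and split the integral into $|x-y|\le 1$ and $|x-y|>1$. On the near region we estimate $|\o|\le\|\o\|_{L^\infty}$ and use $\int_{|z|\le1}|z|^{-1}\,\dd z<\infty$. On the far region we apply H\"older with exponents $p_0$ and $p_0'$. Since $p_0<2$, we have $p_0'>2$, so $|z|^{-1}\in L^{p_0'}(\{|z|>1\})$. This is the only place where the restriction $p_0<2$ matters for boundedness.

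For (ii), we follow the classical argument of Chapter 8 of \cite{Maj-Bert}. Fix $x$ and $h$ with $0<|h|<1$, set $r:=|h|$, and write
\[
u(x+h)-u(x)=\int \big(K(x+h-y)-K(x-y)\big)\,\o(y)\,\dd y\,.
\]
We split the domain into three regions.
\begin{enumerate}[(a)]
\item On $\{|x-y|\le 2r\}$, we bound each kernel term separately by $|z|^{-1}$ times $\|\o\|_{L^\infty}$. Both contributions are integrated over balls of radius $\lesssim r$, which gives $O(r\,\|\o\|_{L^\infty})$.
\item On $\{2r<|x-y|\le 2\}$, the mean value theorem gives $|K(x+h-y)-K(x-y)|\lesssim r\,|x-y|^{-2}$. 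Integrating against $\|\o\|_{L^\infty}$ yields $r\int_{2r}^2 s^{-1}\,\dd s\lesssim r\log(1+1/r)\,\|\o\|_{L^\infty}$.
\item On $\{|x-y|>2\}$, we use the same mean value bound $r|z|^{-2}$ together with H\"older against $\|\o\|_{L^{p_0}}$. Since $|z|^{-2}\in L^{p_0'}(\{|z|>2\})$, this contributes $C\,r\,\|\o\|_{L^{p_0}}$.
\end{enumerate}
Summing the three contributions gives $|u(x+h)-u(x)|\lesssim r\log(1+1/r)\big(\|\o\|_{L^{p_0}}+\|\o\|_{L^\infty}\big)$, which is the claimed bound. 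The $L^\infty$ part of the $LL$ norm is already controlled by (i).

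The only delicate point is region (b). We must check that the mean value theorem applies, i.e. that the segment from $x-y$ to $x+h-y$ stays in $\{|z|\ge r\}$ when $|x-y|>2r$ and $|h|=r$. This holds because $|x-y+\theta h|\ge 2r-r=r>0$ for all $\theta\in[0,1]$, so the kernel is smooth along the segment. Finally, the convolution must be meaningful for $\o\in L^{p_0}\cap L^\infty$. This follows from the same near/far splitting used for the $L^\infty$ bound, which shows that the integral converges absolutely.
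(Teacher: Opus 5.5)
Your proposal is correct and follows essentially the paper's proof: Hardy--Littlewood--Sobolev plus a near/far splitting of the kernel for (i), and for (ii) a near field $\{|x-y|\le 2r\}$ bounded term by term, with a far field handled by the mean value theorem and split again at a fixed radius ($1$ in the paper, $2$ in yours) to use $\|\omega\|_{L^\infty}$ near and $\|\omega\|_{L^{p_0}}$ far. One point to tighten: the bound $\lesssim r\,|x-y|^{-2}$ needs $|x-y+\theta h|\ge |x-y|-r\ge \frac12|x-y|$ on $\{|x-y|>2r\}$, which is slightly stronger than the smoothness check $|x-y+\theta h|\ge r$ you single out (the paper states this comparability explicitly).
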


\begin{proof}
 The fact that $u\in L^{q_0}(\R^2)$, together with the claimed bound on the corresponding norm, is an immediate consequence of the Hardy-Littlewood-Sobolev inequality
(see \tsl{e.g.} Theorem 1.7 in \cite{BCD}), after noticing that the kernel $K$ of the Biot-Savart law, namely
 \[
  K(z)\,:=\,\frac{1}{2\pi}\,\frac{1}{|z|^2}\,z^\perp\,,
 \]
is a homogeneous function of degree $-1$.

Let us bound $u$ in $L^\infty$. For this, one can use the decomposition
\[
u(x)\,=\,\int_{B(0,1)} K(y)\,\omega(x-y)\,\dd y\,+\,\int_{\R^2\setminus B(0,1)}K(y)\,\omega(x-y)\,\dd y\,,
\]
where $B(0,1)$ denotes the ball in $\R^2$ of center $0$ and radius $1$. Thus, the claimed bound follows from Young inequality for convolutions,
after using that $K$ is locally integrable near the origin, while at infinity it belongs to any $L^p$ space, for $p>2$.
This completes the proof of item (i).

For proving item (ii) of the statement, we fix $x\in \R^2$ and $y\in B(x,1)$; we set
\[
r\,:=\,|x-y|\,\leq\,1\,.
\]
Then, we write
\[
u(x)-u(y)\,=\,\int_{\R^2}\big(K(x-z)-K(y-z)\big)\,\omega(z)\,\dd z\,,
\]
from which we can estimate
\[
\big|u(x)-u(y)\big|\,\leq\,
\int_{\R^2}|\omega(z)|\, \big|K(x-z)-K(y-z)\big|\,\dd z\,.
\]
We split the integral into a near-field and a far-field contribution:
\begin{align}
\label{est:near+far}
    \int_{\R^2}|\omega(z)|\, \big|K(x-z)-K(y-z)\big|\,\dd z
    &=
    \int_{|x-z|\leq 2r} \!\!\! |\omega(z)|\,\big|K(x-z)-K(y-z)\big|\,\dd z \\
\nonumber
    &\qquad +\,\int_{|x-z|>2r} \!\!\! |\omega(z)|\,\big|K(x-z)-K(y-z)\big|\,\dd z\,. 
\end{align}
We start by considering the near-field contribution
\begin{align*}
 I_{\rm near}\,&:=\,\int_{|x-z|\leq 2r} \!\!\! |\omega(z)|\,\big|K(x-z)-K(y-z)\big|\,\dd z \\
 &\leq\,\left\|\o\right\|_{L^\infty}\,\int_{|x-z|\leq 2r} \!\!\!\big|K(x-z)-K(y-z)\big|\,\dd z\,.
\end{align*}
Using the triangle inequality, we obtain
\[
\big|K(x-z)-K(y-z)\big|\,\leq\,\big|K(x-z)\big|\,+\,\big|K(y-z)\big|\,
\leq\,
C\,\left(\frac1{|x-z|}\,+\,\frac1{|y-z|}\right)\,.
\]
Therefore, we deduce that
\[
I_{\mathrm{near}}\,\lesssim\,\left\|\o\right\|_{L^\infty}\,\left(\int_{|x-z|\le 2r} \frac{\dd z}{|x-z|}\,+\,
\int_{|x-z|\le 2r} \frac{\dd z}{|y-z|}\right)\,,
\]
where a change of variables immediately gives
\[
\int_{|x-z|\le 2r} \frac{\dd z}{|x-z|}
\,=\, \int_{|w|\le 2r} \frac{\dd w}{|w|}\,\lesssim\, r\,.
\]
On the other hand, we note that $|x-z|\,\leq\, 2\,r$ implies
\[
|y-z|\,\leq\, |y-x|\,+\,|x-z|\,\leq\, 3\,r\,,
\]
whence $\big\{|x-z|\leq 2r\big\}\,\subset\,\big\{|y-z|\le 3r\big\}$. This implies that
\[
\int_{|x-z|\le 2r} \frac{\dd z}{|y-z|}\,\leq\,\int_{|y-z|\leq 3r} \frac{\dd z}{|y-z|}\,=\,\int_{|w|\leq 3r} \frac{\dd w}{|w|}
\,\lesssim\, r\,.
\]
Combining the two bounds finally yields
\begin{equation} \label{est:near}
I_{\mathrm{near}}\,\lesssim\,\left\|\o\right\|_{L^\infty}\, r\,.
\end{equation}
Next, we focus on the far-field term
\[
 I_{\rm far}\,:=\,\int_{|x-z|> 2r} \!\!\!  |\omega(z)|\, \big|K(x-z)-K(y-z)\big|\,\dd z\,.
\]
Since $|x-z|>2r$, we can use the smoothness of the kernel and, by the mean value theorem, write
\[
\big|K(x-z)-K(y-z)\big|\,\leq\,
|x-y|\, \sup_{\xi\in[x,y]} \big|\nabla K(\xi-z)\big|\,,
\]
where $[x,y]$ denotes the line segment in $\R^2$ joining the points $x$ and $y$.
Observe that we have
$\big|\nabla K(w)\big|\,\lesssim\,|w|^{-2}$.
At the same time, by triangular inequality, we can write
\[
 |\xi-z|\,\geq\,|x-z|\,-\,|\xi-x|\,\geq\,|x-z|\,-\,r\,\geq\,\frac{1}{2}\,|x-z|\,.
\]
Therefore, gathering the previous inequalities, we obtain
\[
\big|K(x-z)-K(y-z)\big|\,\lesssim\,r\,|x-z|^{-2}\,,
\]
from which we deduce the following bound:
\begin{align*}
I_{\mathrm{far}}\,&\lesssim\,r\,\int_{|x-z|>2r} |\omega(z)|\, \frac{\dd z}{|x-z|^2} \\
&\lesssim\,
r\,\left\|\o\right\|_{L^\infty}\,\int_{2r}^{1} \frac{\dd\rho}{\rho}\,+\,
r\,\int_{|x-z|>1} |\omega(z)|\, \frac{\dd z}{|x-z|^2}\,. 
\end{align*}
From this relation, thanks to the fact that $p_0\in\,]1,2[\,$ and that $|z|^{-2q}$ is integrable over $\R^2$ for any $q>1$,
we immediately get the following estimate for $I_{\rm far}$:
\begin{align}
\label{est:far}
I_{\mathrm{far}}\,&\lesssim\,r\,|\log r|\,\left\|\o\right\|_{L^\infty}\,+\,r\,\left\|\o\right\|_{L^{p_0}}\,.
\end{align}
At this point, inserting inequalities \eqref{est:near} and \eqref{est:far} into \eqref{est:near+far} immediately yields that
$u$ belongs to the log-Lipschitz space $LL(\R^2)$, together with the claimed bound on its log-Lipschitz seminorm.

This concludes the proof of the proposition.
\end{proof}

\subsection{The vorticity formulation of the equations} \label{ss:vort-form}
With Proposition \ref{p:BS} at hand, we can reformulate system \eqref{eq:2D-microp} in an equivalent way as a system on the new unknowns $(\o,m)$. Since in two space
dimensions the stretching term is identically zero, we get
\begin{equation} \label{eq:o-m}
\left\{\begin{array}{l}
        \d_t\o\,+\,u\cdot\nabla\o\,=\,-\,\alpha\,\Delta m \\[1ex]
        \d_tm\,+\,u\cdot\nabla m\,-\,\k\,\Delta m\,=\,\alpha\,\o \\[1ex]
        u\,=\,-\,\nabla^\perp(-\Delta)^{-1}\o\,,
       \end{array}
\right.
\end{equation}
supplemented with the initial datum
\[
 \big(\o,m\big)_{|t=0}\,=\,\big(\o_0,m_0\big)\,,\qquad\qquad \mbox{ where }\qquad \o_0\,:=\,\curl(u_0)\,.
\]

Indeed, we have the following result, whose proof is classical but for which, for the sake of completeness, we also provide a sketch of the proof. We postpone a few remarks
on the statement at the end of the proof.
\begin{lemma} \label{l:equiv}
Let $u_0$ be a divergence-free vector field such that $\o_0\,:=\,\curl(u_0)$ belongs to the space $L^{p_0}(\R^2)\cap L^\infty(\R^2)$, with $p_0\in\,]1,2[\,$.
Let $T>0$ be a given time and assume that $\o\in L^\infty_\loc\big([0,T[\,;L^{p_0}(\R^2)\cap L^\infty(\R^2)\big)$ is a weak solution to the transport equation
\[
 \d_t\o\,+\,u\cdot\nabla\o\,=\,-\,\alpha\,\Delta m\,,\qquad u\,=\,-\,\nabla^\perp(-\Delta)^{-1}\o\,,
\]
related to the initial datum $\o_0$ and to some forcing term $m\in L^\infty_\loc\big([0,T[\,;L^{p_0}(\R^2)\cap L^\infty(\R^2)\big)$.

Then, there exists a tempered distribution $\nabla\Pi\in\mc S'\big([0,T[\,\times\R^2\big)$ such that the divergence-free vector field $u$ is a weak
solution to the equation
\begin{equation} \label{eq:u}
 \d_tu\,+\,(u\cdot\nabla)u\,+\,\nabla\Pi\,=\,-\alpha\,\nabla^\perp m\,,\qquad \div u\,=\,0\,,
\end{equation}
related to the initial datum $u_0$.
\end{lemma}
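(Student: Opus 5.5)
The strategy is to show that the vector field $F:=\d_tu+(u\cdot\nabla)u+\alpha\nabla^\perp m$, which is a tempered distribution, has zero curl. A curl-free distribution on $\R^2$ is a gradient, so we can set $\nabla\Pi:=-F$.

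\textbf{Step 1: $F$ is a well-defined distribution.} By Proposition \ref{p:BS}, $u\in L^\infty_\loc([0,T[\,;L^{q_0}\cap L^\infty)$. Hence $u\otimes u\in L^\infty_\loc(L^{q_0/2}\cap L^\infty)$ and $(u\cdot\nabla)u=\div(u\otimes u)$, using $\div u=0$. The remaining terms $\d_tu$ and $\nabla^\perp m$ are derivatives of locally bounded functions with values in Lebesgue spaces. So $F\in\mc S'([0,T[\,\times\R^2)$.

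\textbf{Step 2: $\curl F=0$.} We work in the weak formulation, testing against $\nabla^\perp\phi$ with $\phi\in\mc D([0,T[\,\times\R^2)$. Taking $\curl$ of each term:
\begin{itemize}
\item $\curl\d_tu=\d_t\o$;
\item $\curl(\nabla^\perp m)=\Delta m$;
\item $\curl\div(u\otimes u)=\div(u\,\o)=u\cdot\nabla\o$, at least formally.
\end{itemize}
The vorticity equation then gives $\curl F=0$ in the sense of distributions.

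The key point is the identity $\curl\div(u\otimes u)=\div(u\,\o)$ for $u$ merely log-Lipschitz with $\o\in L^{p_0}\cap L^\infty$. To prove it, I would mollify: set $u_\epsilon:=u*\rho_\epsilon$, where the identity is a classical algebraic computation using $\div u_\epsilon=0$. Then pass to the limit using $u_\epsilon\to u$ in $L^{q_0}_\loc\cap L^\infty$ and $\o_\epsilon\to\o$ in $L^{p_0}$. Both $u_\epsilon\otimes u_\epsilon$ and $u_\epsilon\o_\epsilon$ converge in $L^1_\loc$, which is enough in $\mc D'$.

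The initial datum is handled the same way. In the weak formulation of the $\o$-equation, the term $\int\o_0\phi(0)$ equals $\int u_0\cdot\nabla^\perp\phi(0)$ up to sign, because $u_0$ is recovered from $\o_0$ by the Biot-Savart law. This uses the fact that $u_0$ is divergence-free with curl $\o_0$, and that $u_0$ decays in $L^{q_0}$, so no harmonic part is added.

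\textbf{Step 3: a curl-free distribution is a gradient.} This step is the main obstacle, because it must be done with time dependence and tempered growth.

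I would first try Fourier analysis in $x$. From $\curl F=0$ we get $\xi^\perp\cdot\widehat F=0$, which suggests defining
\[
\nabla\Pi:=-\nabla\Delta^{-1}\div F=-\nabla\Delta^{-1}\div\div(u\otimes u)+\nabla\Delta^{-1}\d_t\div u+\dots
\]
In practice this reduces to setting
\[
\Pi:=(-\Delta)^{-1}\div\div(u\otimes u),
\]
which is well defined in $L^\infty_\loc(L^{q_0/2})$ by Calder\'on-Zygmund theory, since $q_0/2>1$. The $\nabla^\perp m$ and $\d_tu$ terms have vanishing divergence and therefore contribute nothing.

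It then remains to check that $G:=F+\nabla\Pi$ satisfies both $\div G=0$ and $\curl G=0$, so that $G$ is harmonic in $x$. Since $G$ is a sum of derivatives of functions lying in $L^r$ spaces with $r<\infty$, Liouville's theorem forces $G=0$. I expect the care to lie here: one has to make sure that no polynomial-in-$x$ component is lost. The fact that every term is a derivative of an $L^r$ function with $r$ finite is what excludes such components.
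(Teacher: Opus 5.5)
Your proposal is correct, but it constructs the pressure by a different route from the paper. The paper applies the Biot--Savart operator $-\nabla^\perp(-\Delta)^{-1}$ to the vorticity equation itself, testing it against the non-compactly supported functions $-(-\Delta)^{-1}\curl\psi$. It then combines two facts:
\begin{itemize}
\item the Lamb identity $(u\cdot\nabla)u=\o\,u^\perp+\frac12\nabla|u|^2$, which gives $\div(\o\,u)=\curl(\o\,u^\perp)$;
\item the Helmholtz-type property that $-\nabla^\perp(-\Delta)^{-1}\curl v-v$ is an $L^q$ gradient for $v\in L^q$, vanishing when $\div v=0$.
\end{itemize}
In the paper, the pressure gradient therefore appears only implicitly, as (minus) the gradient part of $(u\cdot\nabla)u$, and its time regularity is argued afterwards.

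You instead pair only with compactly supported fields $\nabla^\perp\phi$ to get $\curl F=0$. You then write $\Pi=(-\Delta)^{-1}\div\div(u\otimes u)$ explicitly and remove the harmonic remainder $G=F+\nabla\Pi$ by Liouville. This Liouville step is the counterpart of the uniqueness of the $L^q$ Helmholtz decomposition hidden in the paper's argument. Your route costs this extra step, but it buys two things: you never need to justify non-local test functions, and you get directly $\Pi\in L^\infty_\loc([0,T[\,;L^r(\R^2))$ for every $r\in[q_0/2,+\infty[\,$. The latter settles the time-regularity issue raised at the end of the paper's proof.

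To make Step 3 airtight, argue slice by slice. For $\chi\in\mc D(]-\infty,T[)$, the distribution $G_\chi:=\lan G,\chi\otimes\cdot\,\ran$ on $\R^2$ is harmonic. It is also a finite sum of $L^r$ functions and first derivatives of $L^r$ functions with $r<+\infty$, including the initial-time contribution $-\chi(0)\,u_0$. Hence $G_\chi$ is a polynomial, and convolving with a mollifier shows that a polynomial lying in $L^{r_1}+\dots+L^{r_k}$, with all $r_i<+\infty$, must vanish.

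Two minor points.
\begin{itemize}
\item The mollification in Step 2 is not needed. Since $u\in W^{1,q}\cap L^\infty$ for all $q\geq q_0$, the Lamb identity holds a.e., and $\curl(\o\,u^\perp)=\div(\o\,u)$ is purely algebraic in $\mc D'$; this is what the paper uses.
\item The relation $\int\o_0\,\phi(0)\dx=-\int u_0\cdot\nabla^\perp\phi(0)\dx$ only requires $\curl u_0=\o_0$. What you actually need is the representation $u_0=-\nabla^\perp(-\Delta)^{-1}\o_0$, which the paper also assumes tacitly, and you need it in Step 3, to have $u_0\in L^{q_0}$. A constant part $c$ in $u_0$ would survive Liouville and could only be absorbed by a non-physical pressure term of the form $\delta_{t=0}\otimes(c\cdot x)$.
\end{itemize}
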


\begin{proof}
In order to simplify the presentation, let us deal with the integrability/regularity of the various quantity with respect to the space variable only. Indeed,
the time regularity ($L^\infty$ for both $\o$ and $m$) will not enter into play and the variable $t\geq0$ can be treated as a parameter in our argument.

To begin with, we observe that, thanks to the assumptions on $\o$ and to Proposition \ref{p:BS}, we deduce that
$u\in L^{q_0}\cap L^\infty$. Recall that $p_0<2<q_0$. Thus,  since $u$ is divergence-free and $\o\in L^{p_0}\cap L^\infty$, from the previous properties
we also infer that $u\,\in\,W^{1,q}$ for any $q\in[q_0,+\infty[\,$.

Next, we observe that the transport term in the equation of $\o$ can be written as
\begin{equation} \label{eq:writing-transp}
 u\cdot\nabla\o\,=\,\div\big(\o\,u\big)\,=\,\curl\big(\o\,u^\perp\big)\,,
\end{equation}
where, as usual in this paper, we have set $u^\perp\,=\,(-u^2,u^1)$ to be the rotation of angle $\pi/2$ of the vector $u$.
Therefore, in order to complete the proof, it is enough to apply the operator $-\nabla^\perp(-\Delta)^{-1}$ to both members of the equations: this is possible in the weak sense,
by testing the momentum equation on test-functions of the form $-(-\Delta)^{-1}\curl\psi$, with $\psi\in\mc D\big([0,T[\,\times\R^2;\R^2\big)$ be a smooth
compactly supported vector field over $\R^2$. Indeed, we observe that, for any two-dimensional vector field $v\in L^q(\R^2;\R^2)$, for some $q\in\,]1,+\infty[\,$, one has
\begin{align*}
-\nabla^\perp(-\Delta)^{-1}\curl v\,=\,v\,+\,\nabla\phi\,,\qquad\qquad &\mbox{ with }\qquad \nabla\phi\,\in\,L^q(\R^2;\R^2) \\
&\quad \mbox{ and }\qquad \nabla\phi\equiv0\quad \mbox{ if }\quad \div v \,=\,0\,.
\end{align*}
Using this property together with relation \eqref{eq:writing-transp} and the algebraic identity
\[
\o\,u^\perp\,=\,(u\cdot\nabla)u\,-\,\frac{1}{2}\,\nabla |u|^2
\]
in the case of space dimension $d=2$ (as in our setting), it is easy to deduce that $u$ satisfies the claimed equation, for a suitable
tempered distribution $\nabla\Pi$.

Before concluding, let us spend a few words about the time regularity of $\nabla\Pi$, because in principle the operator $(-\Delta)^{-1}$ may hide some terms with wild
dependence on the time variable. Actually, owing to \eqref{eq:u}, we see that this is not possible, as $\nabla\Pi$ must be a $W^{-1,\infty}$ distribution with respect to time.
Hence, we finally deduce that $\nabla\Pi\,\in\,\mc S'\big([0,T[\,\times\R^2\big)$, as claimed in the statement.
\end{proof}

%

For later use (especially in view of the uniqueness argument of Section \ref{s:uniqueness}), we now want to further analyse the regularity
of the pressure gradient appearing in the equation for $u$.

\begin{lemma} \label{l:pressure}
Let the integrability indices $p_0$ and $q_0$ be given as in Proposition \ref{p:BS}.
Given some time $T>0$, assume that equation \eqref{eq:u} holds true in the weak sense on $[0,T[\,\times\R^2$:
\[
 \d_tu\,+\,(u\cdot\nabla)u\,+\,\nabla\Pi\,=\,-\alpha\,\nabla^\perp m\,,\qquad \div u\,=\,0\,,
\]
with $u\in L^\infty_\loc\big([0,T[\,;L^{q_0}(\R^2)\cap LL(\R^2)\big)$ being a vector field such that $\o\,:=\,\curl(u)$ satisfies
$\o\in L^\infty_\loc\big([0,T[\,;L^{p_0}(\R^2)\cap L^\infty(\R^2)\big)$,
with $m\in L^\infty_\loc\big([0,T[\,;L^{p_0}(\R^2)\cap L^\infty(\R^2)\big)$ and with a suitable pressure term $\nabla\Pi\in \mc S'\big([0,T[\,\times\R^2\big)$.

Then, one has $\nabla\Pi\,\in\,L^\infty_\loc\big([0,T[\,;L^{p_0}(\R^2)\cap L^\infty(\R^2)\big)$.
\end{lemma}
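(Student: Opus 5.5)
The plan is to read off $\nabla\Pi$ from the divergence of \eqref{eq:u}, i.e.\ from the elliptic equation it satisfies, and to bound it with the static estimates of Proposition \ref{p:BS} combined with Calder\'on--Zygmund theory. As in the proof of Lemma \ref{l:equiv}, time plays the role of a parameter, so all the bounds below are pointwise in $t$ and uniform on compact subintervals of $[0,T[\,$.

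\textbf{Step 1: identification of the pressure.} Taking the divergence of \eqref{eq:u} in the weak sense, and using $\div u=0$ and $\div(\nabla^\perp m)=0$, we get
\[
-\Delta \Pi\,=\,\div\big((u\cdot\nabla)u\big)\,=\,\sum_{j,k}\d_ju^k\,\d_ku^j\,.
\]
Two representations then follow:
\[
\nabla\Pi\,=\,-\,\mathcal Q\big((u\cdot\nabla)u\big)\qquad\mbox{ and }\qquad \nabla\Pi\,=\,\nabla(-\Delta)^{-1}\Big(\sum_{j,k}\d_ju^k\,\d_ku^j\Big)\,,
\]
where $\mathcal Q=\nabla\Delta^{-1}\div$ is the gradient part of the Helmholtz decomposition. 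Here is how I intend to justify them. The $\mc S'$ distribution $\nabla\Pi$ produced in Lemma \ref{l:equiv} is, by construction, a gradient equal to $-\mathcal Q$ of the right-hand side, modulo gradients of harmonic tempered distributions, i.e.\ polynomials. These polynomial gradients are excluded once the formula above is shown to lie in some $L^q$ space; the momentum equation also shows that they would have to vanish. I expect this to need a few careful lines rather than to be a real difficulty.

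\textbf{Step 2: the $L^{p_0}$ bound.} Since $\o\in L^{p_0}\cap L^\infty$, Calder\'on--Zygmund estimates give $\nabla u\in L^q$ for every $q\in\,]p_0,+\infty[\,$, in particular $\nabla u\in L^2$ because $p_0<2$. Moreover $u\in L^{q_0}$ by Proposition \ref{p:BS}. Since $1/q_0+1/2=1/p_0$, H\"older's inequality yields $(u\cdot\nabla)u\in L^{p_0}$. As $\mathcal Q$ is bounded on $L^{p_0}$, with $1<p_0<\infty$, we obtain $\nabla\Pi\in L^{p_0}$, with norm bounded by $C\,\|\o\|_{L^{p_0}}\big(\|\o\|_{L^{p_0}}+\|\o\|_{L^\infty}\big)$.

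\textbf{Step 3: the $L^\infty$ bound (main obstacle).} This cannot come from $\mathcal Q$, which is unbounded on $L^\infty$; in addition $\nabla u$ is only BMO-type and $u\otimes u$ is merely log-Lipschitz, so the first representation fails here. I will instead use the second representation. Set $f:=\sum\d_ju^k\d_ku^j$. By Step 2 we have $f\in L^r$ for every $r\in\,]p_0/2,+\infty[\,$, so in particular for some $r_1<2$ and some $r_2>2$. The operator $\nabla(-\Delta)^{-1}$ is convolution with a kernel bounded by $C|z|^{-1}$. I will split it exactly as in the proof of item (i) of Proposition \ref{p:BS}. On $B(0,1)$, the kernel belongs to $L^{r_2'}$ since $r_2'<2$, and the local piece is bounded by $\|f\|_{L^{r_2}}$. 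Outside $B(0,1)$, the kernel belongs to $L^{r_1'}$ since $r_1'>2$, and the far piece is bounded by $\|f\|_{L^{r_1}}$. Young's inequality then gives $\|\nabla\Pi\|_{L^\infty}\lesssim \|f\|_{L^{r_1}}+\|f\|_{L^{r_2}}$, and by Calder\'on--Zygmund and interpolation this is controlled by $C\big(\|\o\|_{L^{p_0}}+\|\o\|_{L^\infty}\big)^2$.

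Taking the supremum over $t$ in any compact subinterval of $[0,T[\,$ then gives $\nabla\Pi\in L^\infty_\loc\big([0,T[\,;L^{p_0}\cap L^\infty\big)$. The delicate points are the consistency between the two representations in Step 1 and the choice of $r_1<2<r_2$ inside the admissible range $]p_0/2,+\infty[\,$ in Step 3; this range is nonempty on both sides of $2$ precisely because $p_0<2$.
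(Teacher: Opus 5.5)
Your proof is correct and follows the same route as the paper. You take the divergence to get $-\Delta\Pi=\div\big((u\cdot\nabla)u\big)=\nabla u:\nabla u$, obtain the $L^{p_0}$ bound from Calder\'on--Zygmund applied to $(u\cdot\nabla)u\in L^{p_0}$, and deduce the $L^\infty$ bound from $\nabla u:\nabla u\in L^r$ for $r$ on both sides of $2$. The differences are minor: for the last step the paper shows $\nabla\Pi\in W^{1,q_0}\hookrightarrow L^\infty$ (using $q_0>2$), whereas you estimate the Riesz potential $\nabla(-\Delta)^{-1}f$ directly with the near/far kernel splitting of Proposition \ref{p:BS}. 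You also explicitly rule out harmonic-polynomial ambiguities in $\nabla\Pi$, a point the paper leaves implicit.
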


\begin{proof}
We are going to essentially follow the analysis performed in Sections 3 and 5 of \cite{F_2025_Yud}. In addition, since all the quantities involved in the computations
are $L^\infty_\loc$ over the time interval $[0,T[\,$, let us treat the time as a parameter and focus our discussion just on space integrability/regularity.

We start by applying the divergence operator to equation \eqref{eq:u},
which simply consists in testing the equations on test-functions of the form $\nabla\vphi$, with $\vphi\in\mc D\big([0,T[\,\times\R^2\big)$. Since $\div u=0$,
this yields an elliptic equation for the pressure function:
\begin{equation} \label{eq:ell-p}
 -\,\Delta\Pi\,=\,\,\div\big((u\cdot\nabla)u\big)\,.
\end{equation}
By assumptions over $\o$ and the Calder\'on-Zygmund theory, we know that $\nabla u\in L^{p}(\R^2)$ for any $p\in[p_0,+\infty[\,$. Using that $u\in L^\infty(\R^2)$,
we easily deduce the property
\[
 (u\cdot\nabla)u\,\in\,\bigcap_{p_0\leq p<+\infty} L^p(\R^2)\,.
\]
Observe that, from equation \eqref{eq:ell-p}, one infers the following relation:
\[
 \nabla\Pi\,=\,\nabla(-\Delta)^{-1}\div\big((u\cdot\nabla)u\big)\,.
\]
Therefore, by Calder\'on-Zygmund theory again, we obtain that $\nabla\Pi\in L^{p}(\R^2)$ for any $p\in[p_0,+\infty[\,$.

On the other hand, by using the divergence-free condition over $u$, we can rewrite \eqref{eq:ell-p} as
\[
 -\,\Delta\Pi\,=\,\nabla u:\nabla u\,,
\]
where the right-hand side makes sense a.e. on $\R^2$ and where we have denoted by $A:B\,:=\,{\rm tr}(A\cdot\,^tB)$ the Frobenius product of two matrices $A$ and $B$.
Using again the property $\nabla u\in L^{p}(\R^2)$ for any $p\in[p_0,+\infty[\,$, we then conclude that
$-\Delta\Pi$ belongs to $L^q(\R^2)$ for any $q\in[1,+\infty[\,$. In particular, this yields that $\nabla\Pi \in W^{1,q}(\R^2)$ for any $q\in[p_0,+\infty[\,$.
Taking \tsl{e.g.} $q=q_0>2$, we infer that $\nabla\Pi\in L^\infty(\R^2)$ by Sobolev embeddings.
This concludes the proof of the lemma.
\end{proof}

%
 Should one suppose some integrability for the term $-\alpha\,\nabla^\perp m$, one would recover Lebesgue bounds for the term $\d_tu$ appearing in the equation
 for $u$ and thus prove that, actually, $u$ is a strong solution of the resulting equation. More precisely, one has the following statement.
%
 
 \begin{lemma} \label{l:d_tu}
Under the assumptions of Lemma \ref{l:pressure}, suppose in addition that $\nabla m$ belongs to $L^2_\loc\big([0,T[\,;L^2(\R^2)\big)$.

Then one has $\d_tu\,\in\,L^2_\loc\big([0,T[\,;L^2(\R^2)\big)$. In particular, $u$ is a strong solution to equation \eqref{eq:u}.
\end{lemma}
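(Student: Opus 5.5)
The plan is to read the equation as an identity for $\d_tu$ in the sense of distributions,
\[
 \d_tu\,=\,-\,(u\cdot\nabla)u\,-\,\nabla\Pi\,-\,\alpha\,\nabla^\perp m\,,
\]
and then check that each term on the right-hand side belongs to $L^2_\loc\big([0,T[\,;L^2(\R^2)\big)$. As in the proofs of Lemmas \ref{l:equiv} and \ref{l:pressure}, I will treat time as a parameter for the space integrability, using only that all bounds are $L^\infty_\loc$ or $L^2_\loc$ in time. Because $p_0<2$, the integrability exponent $2$ lies in $[p_0,+\infty[\,$, and this is the only reason the argument closes.

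\begin{enumerate}[(i)]
 \item \emph{The convective term.} Proposition \ref{p:BS} gives $u\in L^\infty_\loc\big([0,T[\,;L^\infty\big)$. Calder\'on--Zygmund theory applied to $\o\in L^{p_0}\cap L^\infty$ gives $\nabla u\in L^p$ for every $p\in[p_0,+\infty[\,$, in particular for $p=2$, with bounds uniform on compact time intervals. Hence $(u\cdot\nabla)u\in L^\infty_\loc\big([0,T[\,;L^2\big)$, as already observed in the proof of Lemma \ref{l:pressure}.
 \item \emph{The pressure term.} Lemma \ref{l:pressure} gives $\nabla\Pi\in L^\infty_\loc\big([0,T[\,;L^{p_0}\cap L^\infty\big)$. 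By interpolation this is contained in $L^\infty_\loc\big([0,T[\,;L^2\big)$.
 \item \emph{The forcing term.} By the new hypothesis, $\alpha\nabla^\perp m\in L^2_\loc\big([0,T[\,;L^2\big)$.
\end{enumerate}
Summing the three contributions gives $\d_tu\in L^2_\loc\big([0,T[\,;L^2\big)$.

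It remains to explain why $u$ is then a strong solution. Every term of \eqref{eq:u} is now an $L^2_\loc\big([0,T[\,;L^2\big)$ function, so the weak formulation, tested against arbitrary $\psi\in\mc D\big([0,T[\,\times\R^2\big)$, forces the equation to hold a.e. in $[0,T[\,\times\R^2$. For the initial datum I will use that $u-u_0=\int_0^t\d_tu\,\dd s$ is continuous in time with values in $L^2$; note that $u$ itself need not lie in $L^2$. So $u\in u_0+\mc C\big([0,T[\,;L^2\big)$, which allows the initial condition to be recovered in the strong sense by comparing with the weak formulation.

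The main obstacle is the identification step: one has to make sure that the distributional time derivative obtained from the weak formulation coincides with the sum above, with no extra time-singular part hidden in $\nabla\Pi$. I would handle this with the remark at the end of the proof of Lemma \ref{l:equiv} together with Lemma \ref{l:pressure}, which give $\nabla\Pi$ as a genuine function with the stated integrability in time.
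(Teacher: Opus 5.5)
Your proposal is correct and matches the paper's proof. The paper likewise reads off $\d_tu=-(u\cdot\nabla)u-\nabla\Pi-\alpha\nabla^\perp m$, uses that $(u\cdot\nabla)u$ and $\nabla\Pi$ lie in $L^\infty_\loc\big([0,T[\,;L^2(\R^2)\big)$ by Lemma \ref{l:pressure} and its proof (which works because $p_0<2$), and combines this with the hypothesis on $\nabla m$. Your extra remarks on recovering the initial datum through $u-u_0\in\mc C\big([0,T[\,;L^2(\R^2)\big)$ go slightly beyond what the paper writes, but they are consistent with it.
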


\begin{proof}
 The claim is an immediate consequence of 
Lemma \ref{l:pressure} and its proof,
in which we proved in particular that, under our assumptions,
both $(u\cdot\nabla)u$ and $\nabla\Pi$ belong to $L^\infty_\loc\big([0,T[\,;L^2(\R^2)\big)$.
Combining these properties with the hypothesis over $\nabla m$ finally yields the result.
\end{proof}

\subsection{An auxiliary unknown and global $L^p$ bounds} \label{ss:leb}
Let us come back to the reduced model \eqref{eq:2D-microp} for micropolar fluids in 2-D.
Thanks to Lemma \ref{l:equiv}, from now on we can safely focus on the study of its vorticity formulation, namely system \eqref{eq:o-m}.
The goal of this subsection is to prove the claimed global bounds on the $L^p$ norms of the solution. Recall that we are working in the context of \tsl{a priori} estimates,
so all the computations we are going to perform are only formal, so far.

The difficulty here relies on the following fact: propagating the $L^\infty$ norm of the vorticity, which seems to be a crucial condition for getting
uniqueness \cite{Yud_1963, A-B-C-DL-G-J-K},
requires a $L^\infty$ bound on $\Delta m$, but this looks out of reach from the parabolic equation for $m$, unless additional regularity is imposed on $\o$ itself. 
Inspired by \cite{H-K-R}, however, we introduce an auxiliary unknown, that is
\begin{equation} \label{eq:def_G}
 \Gamma\,:=\,\frac{1}{\alpha}\,\o\,+\,\frac{1}{\k}\,m\,.
\end{equation}
Combining the equation for $\o$ and the one for $m$, we easily see that $\Gamma$ satisfies the following transport equation:
\begin{equation} \label{eq:Gamma}
 \d_t\Gamma\,+\,u\cdot\nabla\Gamma\,=\,\frac{\alpha}{\k}\,\o\,=\,\frac{\alpha^2}{\k}\,\Gamma\,-\,\frac{\alpha^2}{\k^2}\,m\,.
\end{equation}
We observe that the equation for $\Gamma$ is a transport equation with non-zero right-hand side, which is suitable to close $L^p$-type estimates by a Gr\"onwall argument.
This is the key point to get global in time bounds at the level of regularity prescribed by Yudovich theory.
In what follows, we are going to derive those bounds.

\medbreak
We start by considering the transport equation \eqref{eq:Gamma} for $\Gamma$. Since $u$ is divergence-free, we immediately get,
for $p=p_0$ and $p=+\infty$ and for any $t\geq0$, the bound
\begin{align}
 \label{est:Gamma}
\left\|\Gamma(t)\right\|_{L^p}\,\leq\,\left\|\Gamma_0\right\|_{L^p}\,+\,C\int^t_0\Big(\left\|\Gamma(\t)\right\|_{L^p}\,+\,\left\|m(\t)\right\|_{L^p}\Big)\,\dd\t\,,
\end{align}
for a suitable constant $C>0$ depending only on $\alpha$ and $\k$. Here, with obvious notation, we have defined $\Gamma_0$ by the same formula in \eqref{eq:def_G}, where we
use the initial data $\o_0$ and $m_0$ instead of $\o$ and $m$, respectively.

Let us now switch our attention to the equation for the scalar field $m$. 
By \tsl{e.g.} applying a pointwise method (see for instance Section 3.1 of \cite{F-GB}) to follow the dynamics of the points of maximum and minimum of $m$, we can derive
the following $L^\infty$ bounds for it: for any $t\geq0$, we have
\begin{align}
 \label{est:m-inf}
\left\|m(t)\right\|_{L^\infty}\,\leq\,\left\|m_0\right\|_{L^\infty}\,+\,C\,\int^t_0\Big(\left\|\Gamma(\t)\right\|_{L^\infty}\,+\,\left\|m(\t)\right\|_{L^\infty}\Big)\,\dd\t\,,
\end{align}
where again the constant $C>0$ depends only on $\alpha$ and $\k$.
Next, we multiply the equation for $m$ by the quantity $|m|^{p_0-2}\,m$ and we integrate over $\R^2$: after noticing that
\[
 \forall\,p\in\,]1,+\infty[\,,\qquad \int_{\R^2}(-\Delta m)\,|m|^{p-2}\,m\,\dx\,=\,(p-1)\int_{\R^2}|m|^{p-2}\,|\nabla m|^2\,\dx\,\geq\,0\,,
\]
we obtain the following estimate, for a constant $C=C(\alpha,\k)>0$ depending only on the quantities in the brackets:
\begin{align}
 \label{est:m-p_0}
\left\|m(t)\right\|_{L^{p_0}}\,\leq\,\left\|m_0\right\|_{L^{p_0}}\,+\,C\,\int^t_0\Big(\left\|\Gamma(\t)\right\|_{L^{p_0}}\,+\,\left\|m(\t)\right\|_{L^{p_0}}\Big)\,\dd\t\,.
\end{align}
Performing similar computations for $p=2$, we find instead an additional control on the gradient of $m$: 
for a new constant $C=C(\alpha,\k)>0$, we get
\begin{align}
 \label{est:m-2_provv}
\left\|m(t)\right\|^2_{L^{2}}\,+\,\int^t_0\left\|\nabla m(\t)\right\|^2_{L^2}\,\dd\t\,\leq\,
\left\|m_0\right\|^2_{L^{2}}\,+\,C\,\int^t_0\Big(\left\|\Gamma(\t)\right\|^2_{L^{2}}\,+\,\left\|m(\t)\right\|^2_{L^{2}}\Big)\,\dd\t\,.
\end{align}

Summing up inequalities \eqref{est:Gamma}, \eqref{est:m-inf} and \eqref{est:m-p_0} and applying the Gr\"onwall lemma, we finally deduce the following
global bound for the Lebesgue norms of $\Gamma$ and $m$: for a constant $C=C(\alpha,\k)>0$ as above, one has
\begin{align}
\label{est:G-m_final}
 \forall\,t\geq0\,,\qquad \left\|\Gamma(t)\right\|_{L^{p_0}\cap L^\infty}\,+\,\left\|m(t)\right\|_{L^{p_0}\cap L^\infty}\,\leq\,
 C\,\Big(\left\|\o_0\right\|_{L^{p_0}\cap L^\infty}\,+\,\left\|m_0\right\|_{L^{p_0}\cap L^\infty}\Big)\,e^{C\,t}\,,
\end{align}
where we have set $\|f\|_{X\cap Y}\,:=\,\|f\|_X\,+\,\|f\|_Y$ and where we have also used the expression of $\Gamma_0$ in terms of $\o_0$ and $m_0$.
Using the just proven \eqref{est:G-m_final} into \eqref{est:m-2_provv} also yields
\begin{align}
 \label{est:m-2_final}
\left\|m(t)\right\|^2_{L^{2}}\,+\,\int^t_0\left\|\nabla m(\t)\right\|^2_{L^2}\,\dd\t\,\leq\,C\,
\left(\left\|\o_0\right\|^2_{L^{p_0}\cap L^\infty}\,+\,\left\|m_0\right\|^2_{L^{p_0}\cap L^\infty}\right)\,e^{C\,t}\,,
\end{align}
where the constant $C=C(\alpha,\k)>0$ is as before. We observe that the above $L^2$ estimate for $m$ is not really needed for the proof of existence,
whereas it will play an important role for the uniqueness argument, in order to mimic the original proof of Yudovich (see \cite{Yud_1963}, see also
Chapter 8 of \cite{Maj-Bert}). On the other hand, we remark that this estimate comes for free from our assumptions.

Of course, recovering $\o$ in terms of $\Gamma$ and $m$ from relation \eqref{eq:def_G} yields a similar bound also for the vorticity:
\begin{align}
\label{est:o_final}
 \forall\,t\geq0\,,\qquad \left\|\o(t)\right\|_{L^{p_0}\cap L^\infty}\,\leq\,
C\, \Big(\left\|\o_0\right\|_{L^{p_0}\cap L^\infty}\,+\,\left\|m_0\right\|_{L^{p_0}\cap L^\infty}\Big)\,e^{C\,t}\,,
\end{align}
where $C=C(\alpha,\k)>0$ only depends on the quantities inside the brackets, as before.

Finally, using Proposition \ref{p:BS}, we deduce similar estimates also for the velocity field:
\begin{align}
\label{est:u_final}
 \forall\,t\geq0\,,\qquad \left\|u(t)\right\|_{L^{q_0}\cap LL}\,\leq\,
C\, \Big(\left\|\o_0\right\|_{L^{p_0}\cap L^\infty}\,+\,\left\|m_0\right\|_{L^{p_0}\cap L^\infty}\Big)\,e^{C\,t}\,,
\end{align}
where we recall that $q_0\in\,]2,+\infty[\,$ is defined by the formula $1/q_0\,=\,1/p_0\,-\,1/2$ and where the constant $C>0$ is as above.

\section{Proof of existence} \label{s:existence}
In this section, we prove the existence of global solutions at the level of regularity claimed in Theorem \ref{th:global-wp}.
Owing to Lemma \ref{l:equiv}, it is enough to construct solutions $\big(\o,m\big)$ to the vorticity formulation \eqref{eq:o-m} of our system. For this, we will
proceed in a rather classical way: after smoothing out the initial datum, we will construct a sequence of smooth solutions of the equations related to
the family of regularised initial data.
After showing suitable uniform bounds for the family of smooth approximate solutions,
we will then complete the proof of the existence by passing to the limit in the approximation parameter, by means of a weak compactness method.

For convenience of notation, throughout this section we will adopt the following convention. 
Given a sequence of functions $\big(f_n\big)_{n\in\N}\,\subset\,\mf B$, where $\mf B$ is a Banach space,
we will write $\big(f_n\big)_{n\in\N}\sqsubset \mf B$ if there exists a constant $C>0$ such that $\|f_n\|_{\mf B}\leq C$ for all $n\in\N$.

\subsection{Regularisation of the initial datum} \label{ss:regul}

Let $\big(u_0,m_0\big)$ be the initial datum fixed in the statement of Theorem \ref{th:global-wp}, so that
\[
 \o_0\,,\,m_0\;\in\,L^{p_0}(\R^2)\cap L^\infty(\R^2)\,.
\]
Recall that $\o_0\,:=\,\curl(u_0)$ is the initial vorticity and that $p_0\in\,]1,2[\,$ by assumption.
Then, owing to Proposition \ref{p:BS}, we also know that
\[
u_0\,\in\,L^{q_0}(\R^2) \cap L^{\infty}(\R^2) \cap LL(\R^2)\,,
\]  
where $q_0\in\,]2,+\infty[\,$ is defined by the formula $1/q_0\,=\,1/p_0\,-\,1/2$. Observe that, by Definition \ref{def:LL}, one has
$LL\hookrightarrow L^\infty$, however we write both spaces here above, to stress the property $u_0\in L^\infty$.

By taking \tsl{e.g.} a family of standard mollifiers, we smooth out the initial datum, in such a way that we obtain a sequence of smooth vector fields  
\(\big(u_{0,n}\big)_{n\in\N}\)   and a sequence of smooth functions  \(\big(m_{0,n}\big)_{n\in\N}\)  such that  
\begin{align*}
& \forall\,n\in\N\,,\quad \div u_{0,n}\,=\,0\,, \\
& 
\quad \big(u_{0,n}\big)_{n\in\N}\,\sqsubset\,L^{q_0}(\R^2) \cap L^{\infty}(\R^2) \cap LL(\R^2) \\
&\qquad \mbox{ and }\qquad \big(\o_{0,n}\big)_{n\in\N}\,,\,\big(m_{0,n}\big)_{n\in\N}\;\sqsubset\,L^{p_0}(\R^2)\cap L^\infty(\R^2)\,,
\end{align*}
where, with obvious notation, we have set $\o_{0,n}\,=\,\curl(u_{0,n})$. Notice that, since derivatives commute with convolutions, the function $\o_{0,n}$
is the regularisation, \tsl{via} the mollifying kernel, of the initial vorticity $\o_0$.

More precisely, let us observe that there exists an absolute constant $C>0$, depending only on the properties of the mollifying kernel, such that
\begin{align}
 \label{unif_est:data}
\forall\,n\in\N\,,\qquad 
 &\left\|\o_{0,n}\right\|_{L^{p_0}\cap L^\infty}\,\leq\,C\, \left\|\o_{0}\right\|_{L^{p_0}\cap L^\infty} \\
 \nonumber
 &\quad \mbox{ and }\qquad
\left\|m_{0,n}\right\|_{L^{p_0}\cap L^\infty}\,\leq\,C\,\left\|m_{0}\right\|_{L^{p_0}\cap L^\infty}\,.
\end{align}

In addition, we have the following convergence properties, where we stress that the convergence holds true with respect to the strong topology:
in the limit for $n\to+\infty$, one has
\begin{align*}
& u_{0,n}\, \longrightarrow\,u_0 \qquad\qquad  \text{ in } \qquad L^{q_0}(\R^2) \cap L^\infty_{\loc}(\R^2) \\
& \quad \mbox{ and }\qquad \big(\omega_{0,n}, m_{0,n}\big)\,\longrightarrow\,\big(\o_0, m_0\big) \qquad\qquad \text{ in } \qquad L^{p}(\R^2)\,,
\ \forall\,p\in[p_0,+\infty[\,\,.
\end{align*}   
In fact, one could easily prove that $u_{0,n}\, \longrightarrow\,u_0$ in $L^\infty(\R^2)$, with no localisation on compact sets, owing
to the property $u_0\in LL(\R^2)$. However, this stronger convergence is not needed in our argument.

For later use, we also introduce the approximated initial datum for the variable $\Gamma$, recall definition \eqref{eq:def_G}: we set
\begin{equation} \label{eq:def_G_0-n}
\Gamma_{0,n}(x)\,:=\,\frac{1}{\alpha}\, \omega_{0,n}(x)\,+\,\frac{1}{\kappa}\,m_{0,n}(x)\,.
\end{equation}
It goes without saying that $\big(\Gamma_{0,n}\big)_{n\in\N}\,\sqsubset\, L^{p_0}(\R^2)\cap L^\infty(\R^2)$ and that the strong convergence
$\Gamma_{0,n}\,\longrightarrow\, \Gamma_0$ holds true, when $n\to+\infty$, in the topology of $L^{p}(\R^2)$, for any $p\in[p_0,+\infty[\,$.

\subsection{The sequence of smooth approximate solutions} \label{ss:smooth-sol}
We now construct smooth approximate solutions associated to the sequence of regularised initial data.
As a matter of fact, thanks to the results of \cite{F-FD-MM}, for each $n\in\N$ we can solve
system \eqref{eq:2D-microp} with initial datum $\big(u_{0,n},m_{0,n}\big)$ globally in time.

More precisely, for any $n\in\N$, we have that
\[
 u_{0,n}\quad \mbox{ and }\quad m_{0,n}\qquad\quad \mbox{ belong to }\qquad \bigcap_{k\geq 0}\mc C^k_b(\R^2)\,,
\]
where we have denoted by $\mc C^k_b$ the space of $\mc C^k$ functions which are bounded with all their derivatives up to order $k$.
In addition, we have $\div u_{0,n}\,=\,0$ and both $\o_{0,n}$ and $m_{0,n}$ belong to $L^{p_0}(\R^2)$.
All the assumptions of Theorem 1.2 of \cite{F-FD-MM} are then satisfied. For the reader's convenience, let us recall a simplified version of that
statement here below.

\begin{thm} \label{th:smooth}
Let $k\,\in\,\N$ such that $k\,\geq\,2$. Take an initial datum $\big(u_0,m_0\big)$ such that
\[
 \div u_0\,=\,0\,,\qquad u_0\,\in\,\mc C^k_b(\R^2)\qquad \mbox{ and }\qquad m_0\,\in\,\mc C^{k-1}_b(\R^2)\,.
\]
Assume moreover that there exists $p_0\in\,]1,2[\,$ such that both $\o_0$ and $m_0$ belong to $L^{p_0}(\R^2)$,
where we have defined  $\o_0\,:=\,\curl(u_0)$ to be the vorticity of the initial velocity $u_0$.

Then, there exists a unique global in time solution $\big(u,m\big)$ to system \eqref{eq:2D-microp} related to the initial datum $\big(u_0,m_0\big)$, such that
\begin{align*}
& u\,\in\,L^\infty_\loc\big(\R_+;\mc C^k_b(\R^2)\big)\,\cap\,\mc C\big(\R_+;\mc C^{k-1}_b(\R^2)\big)\,, 
\qquad \mbox{ with }\quad \o\,\in\, \mc C\big(\R_+;L^{p_0}(\R^2)\big)\,, \\
&\qquad \mbox{ and }\qquad\qquad
m\,\in\,L^\infty_\loc\big(\R_+;\mc C^{k-1}_b(\R^2)\big)\,\cap\,\mc C\big(\R_+;\mc C^{k-2}_b(\R^2)\big)\,\cap\,\mc C\big(\R_+;L^{p_0}(\R^2)\big)\,.
\end{align*}
\end{thm}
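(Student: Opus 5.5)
The plan is to work on the vorticity formulation \eqref{eq:o-m} and to replace $\o$ by the auxiliary unknown $\Gamma$ of \eqref{eq:def_G}. This turns the system into the transport equation \eqref{eq:Gamma}, whose forcing is of order zero, coupled with the parabolic equation for $m$ with forcing $\alpha\,\o=\alpha^2\Gamma-\frac{\alpha^2}{\k}m$. In these unknowns there is no loss of derivatives: the operator $-\alpha\Delta m$ has been absorbed into the definition of $\Gamma$.

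\textbf{Local existence and uniqueness.} Because Calder\'on--Zygmund operators are not bounded on $\mc C^{k-1}_b$, I would work first in the Besov scale $B^{s}_{\infty,\infty}$ with $s=k-1+\varepsilon'$ slightly non-integer, or equivalently in H\"older spaces $\mc C^{k-1,\varepsilon}$ for the unknowns $(\Gamma,m)$. These spaces contain the data up to an arbitrarily small loss, they make the Biot--Savart law \eqref{eq:BS} bounded from $\o\in \mc C^{k-1,\varepsilon}\cap L^{p_0}$ to $u\in\mc C^{k,\varepsilon}$, and they give parabolic smoothing for $m$.

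I would then build a scheme $(\Gamma^{n+1},m^{n+1})$ by solving the linear transport equation along $u^n$ and the linear heat--transport equation along $u^n$, where $u^n$ is recovered from $\o^n=\alpha(\Gamma^n-m^n/\k)$ through \eqref{eq:BS}. For this scheme I would use:
\begin{itemize}
\item H\"older estimates for transport, $\|\Gamma(t)\|_{\mc C^{k-1,\varepsilon}}\le e^{C\int_0^t\|\nabla u\|_{\mc C^{k-2,\varepsilon}}}(\dots)$;
\item Schauder estimates for the heat part, where the gradient term $u\cdot\nabla m$ is handled by the smoothing $t^{-1/2}$ of the heat kernel.
\end{itemize}
Together with the $L^{p_0}$ bounds from Subsection \ref{ss:leb}, this gives uniform bounds on a short time interval. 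Contraction would be proved in a low norm such as $L^{p_0}\cap L^\infty$ for $(\Gamma,m)$, and this also yields uniqueness. Finally, I recover $u\in L^\infty_\loc(\mc C^k_b)$, $m\in L^\infty_\loc(\mc C^{k-1}_b)$ and the stated time continuity from the equations, using Lemma \ref{l:equiv} to return to \eqref{eq:2D-microp}.

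\textbf{Global continuation.} A continuation criterion follows from the linear estimates: the solution persists as long as $\int_0^T\|\nabla u\|_{L^\infty}<\infty$. The a priori bound \eqref{est:o_final} controls $\|\o\|_{L^{p_0}\cap L^\infty}$ globally. I would combine it with the logarithmic inequality
\[
\|\nabla u\|_{L^\infty}\lesssim \|\o\|_{L^{p_0}\cap L^\infty}\log\big(e+\|\o\|_{\mc C^{0,\varepsilon}}\big),
\]
and with the $\mc C^{0,\varepsilon}$ estimates for $\Gamma$, which grow like $\exp(C\int\|\nabla u\|_{L^\infty})$. For $m$, the parabolic equation controls $\|m\|_{\mc C^{1,\varepsilon}}$ by $\|\o\|_{L^\infty}$, $\|u\|_{L^\infty}$ and the data. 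Writing $X(t)=\log(e+\|\Gamma(t)\|_{\mc C^{0,\varepsilon}}+\|m(t)\|_{\mc C^{0,\varepsilon}})$, this gives $X'\lesssim e^{Ct}X$. Gr\"onwall then yields a double-exponential bound on $\|\nabla u\|_{L^1_T(L^\infty)}$ for every $T$, so the solution is global. Higher regularity propagates linearly once $\nabla u\in L^1_T(L^\infty)$.

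\textbf{Main obstacle.} The delicate point is the endpoint integer regularity $\mc C^k_b$ and $\mc C^{k-1}_b$ of the statement. Neither the Biot--Savart law nor Schauder theory is bounded there, so the propagation must be done in $\mc C^{k-1,\varepsilon}$ or $B^{k-1}_{\infty,1}$-type spaces and then transferred back. I would first try to show that $\mc C^k_b$ data produce, for positive times, $\mc C^{k-1,\varepsilon}$ vorticity after an arbitrarily small loss. I would then recover the $\mc C^k_b$ bound on $u$ through a log-Lipschitz/Osgood-type argument on $\nabla^{k-1}\o$, with the parabolic gain for $m$ compensating the $\Delta m$ term.
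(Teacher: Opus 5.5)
\paragraph{Gap: the integer endpoint.} You flag the integer endpoint as the main obstacle but do not resolve it, and the fix you sketch cannot work.

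First, $\Gamma_0=\o_0/\alpha+m_0/\k$ lies in $\mc C^{k-1}_b$, but in general not in $\mc C^{k-1,\varepsilon}$ (nor in $B^{k-1+\varepsilon'}_{\infty,\infty}$). So your scheme can only be run one H\"older notch lower, e.g.\ with $\o\in\mc C^{k-2,\varepsilon}$. It then yields $u\in L^\infty_\loc(\R_+;\mc C^{k-1,\varepsilon})$, not $\mc C^k_b$.

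Second, the vorticity is never smoothed at positive times. Integrating \eqref{eq:Gamma} along the flow $X_t$ of $u$ gives $\Gamma(t)=e^{\alpha^2t/\k}\,\Gamma_0\circ X_t^{-1}$ plus a term built from $m$. Only $m$ gains regularity, and $\o(t)$ is exactly as rough as $\o_0$.

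Third, transport does keep $\nabla^{k-1}\o$ bounded, given the lower-order bounds. But $\nabla^k u$ is a Calder\'on--Zygmund operator applied to $\nabla^{k-1}\o$, and such operators are unbounded on $L^\infty$. An Osgood or log-Lipschitz argument controls the flow, not this endpoint operator.

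This is not a technicality. For the 2-D Euler equations, Bourgain and Li proved that $\mc C^k$ regularity of the velocity ($k\geq1$ an integer) is lost instantaneously for a dense set of data. The $\Gamma$-dynamics is Euler up to a zero-order source and the smoother velocity correction $\frac{\alpha}{\k}\nabla^\perp(-\Delta)^{-1}m$. Nothing in your plan rules out the same phenomenon here. The bound $u\in L^\infty_\loc(\R_+;\mc C^k_b)$ would therefore need a genuinely new idea, or a change of scale (e.g.\ $\mc C^{k,\varepsilon}$ or $B^{k}_{\infty,1}$ for $u$).

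\paragraph{What your argument does give.} In such a non-integer scale your plan is sound. The $\Gamma$--$m$ reformulation removes the loss of derivatives, and the transport/Schauder scheme gives local solutions. The Beale--Kato--Majda-type criterion, together with \eqref{est:o_final} and the logarithmic inequality, gives a global double-exponential Lipschitz bound.

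One repairable point: a contraction for $(\Gamma,m)$ in $L^{p_0}\cap L^\infty$ does not close as written. The term $\de u\cdot\nabla\Gamma$ need not lie in $L^{p_0}$ when $\nabla\Gamma$ is merely bounded. It is simpler to obtain existence by compactness and uniqueness from the $L^2$ energy of $(\de u,\de m)$, as in Section \ref{s:uniqueness}.

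Note that the paper gives no proof of this theorem. It imports it, in simplified form, from Theorem 1.2 of \cite{F-FD-MM}. It uses it only for the mollified data $\big(u_{0,n},m_{0,n}\big)$ of Section \ref{s:existence}, which belong to every $\mc C^{k,\varepsilon}$. So the H\"older-scale version your argument actually delivers is all the present paper needs.
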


Therefore, owing to Theorem \ref{th:smooth}, we construct the smooth approximate solution $\big(u^n,m^n\big)$ as the unique solution
to the system \eqref{eq:2D-microp} related to $\big(u_{0,n},m_{0,n}\big)$.
Notice that this solution is defined globally in time. We also point out that the above statement can be applied with any value of $k\geq2$, so
the $n$-th solution is indeed smooth, as claimed.

The next step consists in deriving uniform bound for the sequence $\big(u^n,m^n\big)_{n\in\N}$ that we have just constructed above.
For this, it is useful to write down the explicit equations solved by this sequence: after denoting $\o^n\,:=\,\curl(u^n)$ the vorticity
related to the velocity field $u^n$, one has
\begin{equation} \label{eq:microp_n}
\left\{\begin{array}{l}
        \d_tu^n\,+\,(u^n\cdot\nabla)u^n\,+\,\nabla\Pi^n\,=\,-\,\alpha\,\nabla^\perp m^n \\[1ex]
        \d_tm^n\,+\,u^n\cdot\nabla m^n\,-\,\k\,\Delta m^n\,=\,\alpha\,\o^n \\[1ex]
        \div u^n\,=\,0\,,
       \end{array}
\right.
\end{equation}
related to the initial datum
\[
 \big(u_n,m_n\big)_{|t=0}\,=\,\big(u_{0,n}, m_{0,n}\big)\,,
\]
where the couple $\big(u_{0,n}, m_{0,n}\big)$ is the couple of smooth initial data defined in Subsection \ref{ss:regul}.

We also notice that the vorticity $\o^n\,=\,\curl(u^n)$ solves the equation
\begin{equation}
\label{eq:def_o_n}
    \d_t\o^n\,+\,u^n\cdot\nabla\o^n\,=\,-\,\alpha\,\Delta m^n\,,\qquad\quad \mbox{ with }\quad \o^n_{|t=0}\,=\,\o_{0,n}\,,
\end{equation}
and that, after defining $\Gamma_{0,n}$ as in \eqref{eq:def_G_0-n} and $\Gamma^n$ by the analogous formula related
to $m^n$ and $\o^n$, keep in mind \eqref{eq:def_G}, one has that $\Gamma^n$ satisfies the transport equation
\begin{equation}
\label{eq:def_G_n}
\d_t\Gamma^{n}\,+\,u^n\cdot\nabla\Gamma^{n}\,=\,\frac{\alpha}{\k}\,\o^n\,,
\end{equation}
related to the initial datum $\Gamma_{0,n}$.

\subsection{Uniform bounds for the approximate solutions} \label{ss:unif-bounds}

Let us establish uniform bounds for the family of smooth approximate solutions constructed in the previous subsection.
More precisely, here we are going to prove the following statement.

\begin{lemma} \label{l:uniform}
There exist universal positive constants $C_0>0$ and $K_0>0$ such that, for any time $T>0$ fixed, one has the following estimate:
for any integer $n\in\N$, one has
\begin{equation} \label{est:unif}
\left\|\big(\o^n,m^n\big)\right\|_{L^\infty_T(L^{p_0}\cap L^\infty)}\,+\,\k\,\left\|\nabla m^n\right\|_{L^2_T(L^2)}\,\leq\,
C_0\,\left\|\big(\o_0,m_0\big)\right\|_{L^{p_0}\cap L^\infty}\,e^{K_0\,T}\,,
\end{equation}
where, for simplicity of notation, we have set $\|(f,g)\|_X\,=\,\|f\|_X\,+\,\|g\|_X$.
\end{lemma}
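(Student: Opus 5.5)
The plan is to make the formal a priori estimates of Subsection \ref{ss:leb} rigorous for the smooth solutions $\big(u^n,m^n\big)$ from Theorem \ref{th:smooth}. The only real work is checking that the constants depend just on $\alpha$ and $\k$ and that the data enter through the bound \eqref{unif_est:data}.

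First, for each fixed $n$, the solution has $u^n\in L^\infty_\loc(\mc C^k_b)$ with $\div u^n=0$, $m^n\in L^\infty_\loc(\mc C^{k-1}_b)$ and $\o^n,m^n\in\mc C(\R_+;L^{p_0})$. Since $u^n$ is bounded and Lipschitz, it generates a smooth measure-preserving flow $X^n$. I will write $\Gamma^n$ from \eqref{eq:def_G_n} along characteristics via Duhamel:
\[
\Gamma^n(t,X^n(t,x))\,=\,\Gamma_{0,n}(x)\,+\,\frac{\alpha}{\k}\int_0^t\o^n(\t,X^n(\t,x))\,\dd\t\,.
\]
Taking $L^{p}$ norms for $p\in\{p_0,+\infty\}$, and using that the flow preserves Lebesgue measure, gives \eqref{est:Gamma} with $\o^n=\alpha\Gamma^n-\frac{\alpha}{\k}m^n$. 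This requires no integrability of derivatives.

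Second, for $m^n$ I will justify the $L^{p_0}$ and $L^2$ energy computations. The $L^\infty$ bound \eqref{est:m-inf} can be obtained by the maximum principle: compare $m^n$ with $\|m_{0,n}\|_{L^\infty}+\alpha\int_0^t\|\o^n\|_{L^\infty}$, which is a supersolution, and argue symmetrically for the minimum. Since we work on $\R^2$, this should be phrased through the weak maximum principle for bounded smooth solutions of a parabolic equation with bounded drift, or equivalently the Duhamel formula with the heat-transport semigroup, which is a contraction on every $L^p$.

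For the $L^{p_0}$ estimate, multiplying by $|m^n|^{p_0-2}m^n$ is delicate at the integrability level and because $p_0<2$. I would instead use a smooth cutoff in space, or regularise $|m|^{p_0}$ as $(\varepsilon^2+m^2)^{p_0/2}-\varepsilon^{p_0}$, and pass to the limit. The boundary and cutoff terms vanish thanks to $m^n\in L^{p_0}\cap\mc C^1_b$, and interpolation gives $m^n\in L^2$ with $\nabla m^n$ bounded. The same procedure with $p=2$ yields \eqref{est:m-2_provv}, where the dissipation appears with the factor $\k$. The $L^2$ norm of $\o^n$ on its right-hand side is controlled by interpolation between $L^{p_0}$ and $L^\infty$.

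This justification of the $L^p$ estimates on the whole space, together with the claim that $\nabla m^n\in L^2_T(L^2)$ is finite a priori, is where I expect the main obstacle. I would first try the cutoff $\chi(x/R)$ and use the boundedness of $u^n$ and $\nabla m^n$ to send $R\to\infty$.

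Finally, I will add the three inequalities and apply Gr\"onwall, which gives \eqref{est:G-m_final} for $(\Gamma^n,m^n)$ with initial norms $\|\Gamma_{0,n}\|+\|m_{0,n}\|$. I will then recover $\o^n=\alpha(\Gamma^n-m^n/\k)$ and insert \eqref{unif_est:data}. This yields \eqref{est:unif} with $C_0,K_0$ depending only on $\alpha,\k$ and the mollifier, and in particular independent of $n$ and $T$. The gradient term follows from \eqref{est:m-2_final}, after taking square roots and using $\|\cdot\|_{L^2}\le\|\cdot\|_{L^{p_0}\cap L^\infty}$.
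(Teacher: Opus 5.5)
Your proposal is correct and follows the paper's proof. Like the paper, you use $L^p$ estimates for the transport equation \eqref{eq:def_G_n} for $\Gamma^n$ and $L^{p_0}$, $L^\infty$ bounds for $m^n$, then Gr\"onwall, then recover $\o^n=\alpha\Gamma^n-\frac{\alpha}{\k}m^n$ with \eqref{unif_est:data}, and finally an $L^2$ energy estimate on $m^n$ for the gradient term. The only difference is that the paper simply asserts that the smoothness of $(u^n,m^n)$ makes the formal computations of Subsection \ref{ss:leb} rigorous, whereas you spell that justification out (characteristics, maximum principle, regularised $|m|^{p_0}$, spatial cut-off).

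One small caution about that justification: in the cut-off argument for the $L^{p_0}$ estimate, move the derivative onto the cut-off, i.e.\ write $-\k\int\phi_\varepsilon'(m^n)\nabla m^n\cdot\nabla\chi_R\,\dx=\k\int\phi_\varepsilon(m^n)\Delta\chi_R\,\dx=O\big(R^{-2}\|m^n\|^{p_0}_{L^{p_0}}\big)$. Bounding this term directly with $\|\nabla m^n\|_{L^\infty}$ only gives something of order $R^{2/p_0-1}\,\|m^n\|^{p_0-1}_{L^{p_0}(\{R\le|x|\le2R\})}$, which need not vanish as $R\to\infty$ when $p_0<2$.
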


\begin{proof}
Formula \eqref{est:unif} simply follows from the computations performed in Subsection \ref{ss:leb}, after observing that, for each $n\in\N$ fixed, we have enough
regularity to carry out those computations rigorously.

First of all, similarly to what done in Subsection \ref{ss:leb},
standard $L^p$ estimates performed on the second equation of \eqref{eq:microp_n} and on \eqref{eq:def_G_n} yield, for any time $t\geq0$, the inequality
\begin{align*}
&\left\|\Gamma^{n}(t)\right\|_{L^{p_0}\cap L^\infty}\,+\,\left\|m^{n}(t)\right\|_{L^{p_0}\cap L^\infty} \\
&\qquad \leq\,
C\,\Big(\left\|\o_{0,n}\right\|_{L^{p_0}\cap L^\infty}\,+\,\left\|m_{0,n}\right\|_{L^{p_0}\cap L^\infty}\Big)\,+\,
C\,\int^t_0\left\|\o^n\right\|_{L^{p_0}\cap L^\infty}\,\dd \t\,.
\end{align*}
Now, using the relation \eqref{eq:def_G} linking $\o^n$, $m^n$ and $\Gamma^n$, together with the inequalities in \eqref{unif_est:data},
from that estimate we infer
\begin{align*}
&\left\|\o^{n}(t)\right\|_{L^{p_0}\cap L^\infty}\,+\,\left\|m^{n}(t)\right\|_{L^{p_0}\cap L^\infty}\,\leq\,
C_1\,\left\|\big(\o_{0},m_0\big)\right\|_{L^{p_0}\cap L^\infty}\,+\,C_2\,\int^t_0\left\|\o^n\right\|_{L^{p_0}\cap L^\infty}\,\dd \t\,,
\end{align*}
for two universal constants $C_{1,2}>0$ only depending on the parameters of the problem, but not on the data, nor on the solution, nor on $n\geq1$.
An application of the Gr\"onwall lemma thus yields
\begin{align}
\label{est:m-o_unif}
\forall\,t\geq0\,,\qquad\quad
\left\|\o^{n}(t)\right\|_{L^{p_0}\cap L^\infty}\,+\,\left\|m^{n}(t)\right\|_{L^{p_0}\cap L^\infty}\,\leq\,
C_1\,\left\|\big(\o_{0},m_0\big)\right\|_{L^{p_0}\cap L^\infty}\;e^{C_2\,t}\,.
\end{align}
This completes the proof of the first part of inequality \eqref{est:unif}, with the possibility of taking
\begin{equation} \label{cond_const:1}
  C_0\,\geq\,C_1\qquad\qquad \mbox{ and }\qquad\qquad K_0\,\geq\,C_2\,.
\end{equation}

We are now going to exhibit a uniform bound for the terms $\nabla m^n$, for $n\in\N$.
Performing an energy estimate on the equation for $m^n$ in system \eqref{eq:microp_n} and applying inequality \eqref{est:m-o_unif},
by simple computations we find, for any $t\geq0$, the estimate
\begin{align*}
 \frac{1}{2}\,\left\|m^{n}(t)\right\|_{L^2}^2\,+\,\k\,\int^t_0\left\|\nabla m^{n}(\t)\right\|_{L^2}^2\,\dd\t\,&\leq\,
\frac{1}{2}\,\left\|m_{0,n}\right\|^2_{L^2}\,+\,\int^t_0\left\|\o^n\right\|_{L^2}\,\left\|m^{n}\right\|_{L^2}\,\dd\t \\
 &\leq\,\left(\frac{C}{2}\,+\,\frac{C_1^2}{2\,C_2}\right)\,\left\|\big(\o_{0},m_0\big)\right\|_{L^{p_0}\cap L^\infty}^2\,e^{2\,C_2\,t}\,,
\end{align*}
where the constant $C$ comes from the bound \eqref{unif_est:data} for the regularisation of the initial data.
Therefore, if we further choose $C_0$ and $K_0$ such that, besides \eqref{cond_const:1}, they also satisfy the conditions
\[
 \frac{C}{2}\,+\,\frac{C_1^2}{2\,C_2}\,\leq\,C_0^2\,,
\]
then we deduce that
\[
 \k\,\left\|\nabla m^{n}\right\|_{L^2_T(L^2)}\,\leq\,C_0\,\left\|\big(\o_{0},m_0\big)\right\|_{L^{p_0}\cap L^\infty}\,e^{K_0\,t}\,,
\]
thus completing the proof of the full bound \eqref{est:unif}. This ends the proof of the lemma.
\end{proof}

As a direct consequence of Lemma \ref{l:uniform}, also combined with Proposition \ref{p:BS}, we immediately get the following result.

\begin{cor} \label{c:unif}
The sequences $\big(\o^n\big)_{n\in\N}$ and $\big(m^n\big)_{n\in\N}$ are included in $L^\infty_\loc\big(\R_+;L^{p_0}(\R^2)\cap L^\infty(\R^2)\big)$, while
the sequence $\big(u^n\big)_{n\in\N}$ is included in the space $L^\infty_\loc\big(\R_+;L^{q_0}(\R^2)\cap LL(\R^2)\big)$ and the sequence
$\big(\nabla m^n\big)_{n\in\N}$ is included in $L^2_\loc\big(\R_+;L^{2}(\R^2)\big)$.

In addition, for any time $T>0$ fixed, we have that
\begin{align*}
& \big(\o^n\big)_{n\in\N}\,,\, \big(m^n\big)_{n\in\N}\;\sqsubset\,L^\infty\big([0,T];L^{p_0}(\R^2)\cap L^\infty(\R^2)\big)\,, \\
&\quad \big(\nabla m^n\big)_{n\in\N}\,\sqsubset\,L^2\big([0,T];L^{2}(\R^2)\big)
\qquad \mbox{ and }\qquad \big(u^n\big)_{n\in\N}\,\sqsubset\,L^\infty\big([0,T];L^{q_0}(\R^2)\cap LL(\R^2)\big)\,.
\end{align*}
\end{cor}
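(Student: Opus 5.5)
The plan is to read Corollary \ref{c:unif} off directly from the uniform estimate \eqref{est:unif} of Lemma \ref{l:uniform}, and to combine it with the static Biot--Savart bounds of Proposition \ref{p:BS}. No new computation is needed. The only thing to keep track of is that the constants stay independent of $n$ once $T$ is fixed.

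\emph{Bounds on $\o^n$, $m^n$ and $\nabla m^n$.} Fix $T>0$. By \eqref{est:unif}, for every $n\in\N$,
\[
\|\o^n\|_{L^\infty_T(L^{p_0}\cap L^\infty)}+\|m^n\|_{L^\infty_T(L^{p_0}\cap L^\infty)}+\k\,\|\nabla m^n\|_{L^2_T(L^2)}\,\leq\,C_0\,\|(\o_0,m_0)\|_{L^{p_0}\cap L^\infty}\,e^{K_0T}\,.
\]
The right-hand side depends only on $T$ and on the original data, not on $n$. Since $\k>0$ is fixed, this gives
\[
(\o^n),\,(m^n)\sqsubset L^\infty_T(L^{p_0}\cap L^\infty)\qquad\text{and}\qquad(\nabla m^n)\sqsubset L^2_T(L^2)\,.
\]
Because $T$ is arbitrary, each individual element also lies in the corresponding $L^\infty_\loc(\R_+;\cdot)$ and $L^2_\loc(\R_+;\cdot)$ spaces. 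Measurability in time is automatic, since each $(u^n,m^n)$ is the smooth solution given by Theorem \ref{th:smooth}, with $\o^n,m^n\in\mc C(\R_+;L^{p_0})$.

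\emph{Bound on the velocity.} Since $u^n$ is smooth and divergence-free, with vorticity in $L^{p_0}$, it is recovered from $\o^n$ through the Biot--Savart law \eqref{eq:BS}. Justifying this is the only mildly delicate point: one must rule out the addition of a harmonic bounded field. It follows from uniqueness in Theorem \ref{th:smooth}, or simply from $u^n\in L^{q_0}$, which is propagated from $u_{0,n}\in L^{q_0}$. Once this is settled, apply Proposition \ref{p:BS} at a.e.\ $t\in[0,T]$:
\[
\|u^n(t)\|_{L^{q_0}}+\|u^n(t)\|_{LL}\,\lesssim\,\|\o^n(t)\|_{L^{p_0}}+\|\o^n(t)\|_{L^\infty}\,.
\]
Here we used $\|u\|_{LL}=\|u\|_{L^\infty}+|u|_{LL}$ together with items (i) and (ii). Taking the supremum over $t\in[0,T]$ and inserting the previous bound on $\o^n$ yields $(u^n)\sqsubset L^\infty_T(L^{q_0}\cap LL)$, with a constant depending only on $T$ and the data. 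As before, this gives the $L^\infty_\loc$ statement.
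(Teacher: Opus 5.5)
Your proposal is correct and follows the paper's argument: the paper states the corollary as a direct consequence of the uniform estimate \eqref{est:unif} in Lemma \ref{l:uniform}, with the velocity bounds then supplied by Proposition \ref{p:BS}. Your extra check that $u^n=-\nabla^\perp(-\Delta)^{-1}\o^n$, which the paper takes for granted, is sound, but both of your justifications tacitly use that the pressure in Theorem \ref{th:smooth} is the normalised one: otherwise the Galilean-type change $u^n(t,x-C(t))+C'(t)$, with $m^n$ shifted accordingly and the pressure corrected by $-C''(t)\cdot x$, would give smooth bounded solutions with the same data but $u^n(t)\notin L^{q_0}$.
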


From the previous corollary we will deduce weak convergence properties (up to a suitable extraction) for the family of approximate solutions
$\big(u^n,m^n\big)_{n\in\N}$. However, in order to pass to the limit in the non-linear terms of the equations, some strong convergence is needed; in turn,
this requires to find suitable bounds for the time derivative of the approximate solutions, which will be established in the following statement.

\begin{lemma} \label{l:d_t-o^n}
 The sequences $\big(\d_tm^n\big)_{n\in\N}$ and $\big(\d_t\o^n\big)_{n\in\N}$ are bounded in $L^\infty_\loc\big(\R_+;W^{-2,p_0}(\R^2)\cap W^{-2,\infty}(\R^2)\big)$
 and in $L^2_\loc\big(\R_+;H^{-1}(\R^2)\big)$.
In particular, for any time $T>0$ fixed, one has 
\[
\big(\d_tm^n\big)_{n\in\N}\,,\, \big(\d_t\o^n\big)_{n\in\N}\;\sqsubset\,L^\infty\big([0,T];W^{-2,p_0}(\R^2)\cap W^{-2,\infty}(\R^2)\big)\,\cap\,
L^2\big([0,T];H^{-1}(\R^2)\big)\,.
\]
\end{lemma}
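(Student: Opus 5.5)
We read the bounds on $\d_t m^n$ and $\d_t\o^n$ directly off the equations \eqref{eq:microp_n} and \eqref{eq:def_o_n}, writing all terms in divergence form. The inputs are the uniform bounds of Corollary \ref{c:unif}. Since $\div u^n=0$, we have
\[
\d_t\o^n\,=\,-\,\div\big(\o^n\,u^n\big)\,-\,\alpha\,\Delta m^n\,,\qquad
\d_t m^n\,=\,-\,\div\big(m^n\,u^n\big)\,+\,\k\,\Delta m^n\,+\,\alpha\,\o^n\,.
\]

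\textbf{Bounds in $W^{-2,p_0}\cap W^{-2,\infty}$.} Fix $T>0$. By Corollary \ref{c:unif}, $u^n$ is uniformly bounded in $L^\infty_T(L^\infty)$, and $\o^n,m^n$ are uniformly bounded in $L^\infty_T(L^{p_0}\cap L^\infty)$. Hence the products $\o^n u^n$ and $m^n u^n$ are uniformly bounded in $L^\infty_T(L^{p_0}\cap L^\infty)$.

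A first derivative maps $L^p$ into $W^{-1,p}\hookrightarrow W^{-2,p}$, and a second derivative maps $L^p$ into $W^{-2,p}$. This holds for $p=p_0$ and for $p=\infty$, where $W^{-2,\infty}$ is understood as sums of second derivatives of $L^\infty$ functions. The zero-order term $\alpha\o^n$ lies in $L^{p_0}\cap L^\infty\hookrightarrow W^{-2,p_0}\cap W^{-2,\infty}$. Summing these contributions gives the first bound, uniformly in $n$ on $[0,T]$.

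\textbf{Bounds in $L^2_T(H^{-1})$.} Here the $\Delta m^n$ terms cannot be treated with the $L^\infty_T$ information, so we use the dissipation instead. We write $\Delta m^n=\div\nabla m^n$; since $\nabla m^n$ is uniformly bounded in $L^2_T(L^2)$ by Lemma \ref{l:uniform}, $\Delta m^n$ is uniformly bounded in $L^2_T(H^{-1})$.

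For the transport terms, $\o^n u^n$ and $m^n u^n$ are uniformly bounded in $L^\infty_T(L^2)$. Indeed, $\o^n,m^n$ lie in $L^{p_0}\cap L^\infty\subset L^2$ since $p_0<2$, and $u^n\in L^\infty$. Their divergences are therefore uniformly bounded in $L^\infty_T(H^{-1})\subset L^2_T(H^{-1})$. Finally, $\o^n$ is uniformly bounded in $L^\infty_T(L^2)\subset L^2_T(H^{-1})$.

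\textbf{Main difficulty.} There is no real obstacle. The only point requiring care is to use the gradient bound on $m^n$ for the second-order term in the $H^{-1}$ estimate, and the $L^\infty$-in-time bounds elsewhere. Every constant depends only on $T$ and on the data through \eqref{est:unif}, which gives uniformity in $n$.
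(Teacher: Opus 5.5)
Your proposal is correct and follows the paper's own route: rewrite both equations in divergence form using $\div u^n=0$, read the $L^\infty_T(W^{-2,p_0}\cap W^{-2,\infty})$ bounds off Corollary \ref{c:unif}, and get the $L^2_T(H^{-1})$ bound on $\Delta m^n$ from $\nabla m^n\in L^2_T(L^2)$. The only differences are minor: you place $\o^n u^n$ in $L^{p_0}\cap L^\infty$ using just $u^n\in L^\infty$, whereas the paper uses $u^n\in L^{q_0}$ to reach $L^a\cap L^\infty$ with $a\leq p_0$, and you also write out the $m^n$ case that the paper omits.
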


\begin{proof}
We start by considering, for any $n\in\N$, the equation for $\o^{n}$, which is given by formula \eqref{eq:def_o_n}.
Since $u^n$ is divergence-free, this equation can also be written as
\[
 \d_t\o^{n}\,=\,\,-\,\alpha\,\Delta m^{n}\,-\,\div\big(\o^{n}\,u^n\big)\,.
\]
Now, we use the bounds established in Corollary \ref{c:unif}. First of all, we observe that
\[
 \big(\Delta m^{n}\big)_{n\in\N}\,\subset\,L^\infty_\loc\big(\R_+;W^{-2,p_0}(\R^2)\cap W^{-2,\infty}(\R^2)\big)\,\cap\,L^2_\loc\big(\R_+;H^{-1}(\R^2)\big)\,,
\]
with uniform bounds whenever we restrict to any finite time interval $[0,T]$.
Then, we use the uniform bounds on $\big(\o^n\big)_{n\in\N}$ and $\big(u^n\big)_{n\in\N}$ to get
\[
 \big(\o^{n}\,u^n\big)_{n\in\N}\,\subset\,L^\infty_\loc\big(\R_+;L^a(\R^2)\cap L^\infty(\R^2)\big)\,,\qquad\quad \mbox{ with }\qquad
\frac{1}{a}\,:=\,\min\left\{1\,,\,\frac{1}{p_0}\,+\,\frac{1}{q_0}\right\}\,.
\]
Notice that $p_0\geq a$. Therefore, with those properties at hand, the claimed boundedness results for $\big(\d_t\o^n\big)_{n\in\N}$ easily follow.

Finally, the analysis of the terms $\d_tm^{n}$ relies on the study of the second equation in \eqref{eq:microp_n} and on the use of analogous arguments.
We omit the details here.
\end{proof}

\subsection{Convergence: end of the proof} \label{ss:convergence}
Thanks to the uniform boundedness properties established in the previous part, we can pass to the limit in the weak formulation of the equations,
thus completing the proof of the existence of a solution claimed in Theorem \ref{th:global-wp}.

We will make use of weak and strong convergence properties following from Corollary \ref{c:unif} and Lemma \ref{l:d_t-o^n}. We collect them in the next
statements. We start with a weak convergence result.
\begin{prop} \label{p:converg}
There exist limit profiles $\o$ and $m$, belonging to the space $L^\infty_\loc\big(\R_+;L^{p_0}(\R^2)\cap L^\infty(\R^2)\big)$, such that,
up to the extraction of a suitable subsequence, one has the weak-$*$ convergences
\[
\o^n\,\stackrel{*}{\rightharpoonup}\,\o\qquad \mbox{ and }\qquad m^n\,\stackrel{*}{\rightharpoonup}\,m\qquad\qquad 
\mbox{ in } \quad L^\infty_\loc\big(\R_+;L^{p_0}(\R^2)\cap L^\infty(\R^2)\big)\,.
\]
One moreover has that $m\,\in\,L^2_\loc\big(\R_+;H^1(\R^2)\big)$ and
\[
 \nabla m^n\,\rightharpoonup\,\nabla m\qquad\qquad \mbox{ in }\qquad L^2_\loc\big(\R_+;L^2(\R^2)\big)\,.
\]

In addition, defined $u\,:=\,-\,\nabla^\perp(-\Delta)^{-1}\o$ the velocity field related to $\o$ by the Biot-Savart law \eqref{eq:BS}, one has
$u\,\in\,L^\infty_\loc\big(\R_+;L^{q_0}(\R^2)\cap LL(\R^2)\big)$, together with the convergence
\[
 u^n\,\stackrel{*}{\rightharpoonup}\,u\qquad \mbox{ in }\quad L^\infty_\loc\big(\R_+;L^{q_0}(\R^2)\cap L^\infty(\R^2)\big)\,.
\]
\end{prop}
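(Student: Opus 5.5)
The plan is to combine the uniform bounds of Corollary \ref{c:unif} with Banach--Alaoglu and a diagonal extraction in time, and then identify the weak limits. First, fix $T\in\N$. The bounds on $(\o^n)$ and $(m^n)$ in $L^\infty_T(L^{p_0}\cap L^\infty)$ hold. Since $L^\infty_T(L^{p_0})$ and $L^\infty_T(L^\infty)$ are duals of the separable spaces $L^1_T(L^{p_0'})$ and $L^1_T(L^1)$, we can extract weak-$*$ convergent subsequences in both spaces. The two limits coincide as distributions on $]0,T[\,\times\R^2$, because both convergences imply convergence when tested against $\mc D$. So we get a single limit $\o$, and similarly $m$, in $L^\infty_T(L^{p_0}\cap L^\infty)$; lower semicontinuity of the norms gives the bounds of Lemma \ref{l:uniform} for the limits. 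Letting $T$ run over $\N$ and applying a Cantor diagonal argument yields one subsequence valid on every $[0,T]$, which gives the $L^\infty_\loc$ convergences.

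Next, $(\nabla m^n)$ is bounded in $L^2_T(L^2)$, a Hilbert space. After a further extraction (again diagonal in $T$), $\nabla m^n\rightharpoonup G$ weakly in $L^2_T(L^2)$. Testing against $\phi\in\mc D$ and integrating by parts gives $\int \nabla m^n\cdot\phi=-\int m^n\div\phi\to-\int m\,\div\phi$, so $G=\nabla m$ in the sense of distributions. Since $m\in L^\infty_T(L^{p_0}\cap L^\infty)\subset L^\infty_T(L^2)$ (because $p_0<2$), we obtain $m\in L^2_\loc(\R_+;H^1)$.

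For the velocity, define $u:=-\nabla^\perp(-\Delta)^{-1}\o$. By Proposition \ref{p:BS} applied for a.e.\ $t$, with the bound on $\o$, we get $u\in L^\infty_\loc(\R_+;L^{q_0}\cap LL)$. Measurability in time is the only point that needs a word: the Biot--Savart operator is linear and bounded from $L^{p_0}\cap L^\infty$ to $L^{q_0}\cap L^\infty$, and it maps measurable functions to measurable ones. For the convergence, $(u^n)$ is bounded in $L^\infty_T(L^{q_0}\cap L^\infty)$, so up to extraction $u^n\stackrel{*}{\rightharpoonup}v$ there. The main (mild) obstacle is identifying $v=u$. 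I would do it by duality. For $\psi\in\mc D([0,T[\,\times\R^2;\R^2)$, we have $\int u^n\cdot\psi=\int \o^n\,\big(-(-\Delta)^{-1}\curl\psi\big)$ up to sign, since the Biot--Savart kernel is odd. Here $\Phi:=(-\Delta)^{-1}\curl\psi$ is a convolution of a compactly supported $L^1_tL^r_x$ function with a kernel homogeneous of degree $-1$, so by Hardy--Littlewood--Sobolev it lies in $L^1_T(L^{p_0'})$, as $p_0'>2$. Weak-$*$ convergence of $\o^n$ in $L^\infty_T(L^{p_0})$ then gives $\int u^n\cdot\psi\to\int u\cdot\psi$, hence $v=u$. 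The fact that $u^n=-\nabla^\perp(-\Delta)^{-1}\o^n$ holds because $u^n\in L^{q_0}$ (from Theorem \ref{th:smooth} and Proposition \ref{p:BS}), which excludes additive harmonic constants. Since the whole sequence has its limit identified uniquely, no further extraction affects the statement. This completes the plan.
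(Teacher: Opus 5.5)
Your proposal is correct and follows essentially the same route as the paper. Both arguments apply Banach--Alaoglu to the bounds of Corollary \ref{c:unif}, use a diagonal extraction over increasing time intervals, and identify the weak-$*$ limit of $(u^n)$ through the linearity of the Biot--Savart law. Your duality argument with $(-\Delta)^{-1}\curl\psi\in L^1_T(L^{p_0'})$ via Hardy--Littlewood--Sobolev is an explicit version of the paper's one-line justification, and your identification of the weak limit of $\nabla m^n$ with $\nabla m$ fills in a step the paper leaves implicit.

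The one soft point is your justification of $u^n\in L^{q_0}$. It does not follow from Theorem \ref{th:smooth} as stated together with Proposition \ref{p:BS} unless one already knows $u^n=-\nabla^\perp(-\Delta)^{-1}\o^n$, so that remark is slightly circular. However, the paper takes the same Biot--Savart representation of $u^n$ for granted in Corollary \ref{c:unif}, so your argument loses nothing relative to it.
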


\begin{proof}
 The existence of the profiles $\o$ and $m$ in the claimed space, together with the weak-$*$ convergence properties, are direct consequences of
Corollary \ref{c:unif}, combined with the Banach-Alaoglu theorem and a diagonal process (needed when considering increasing time intervals covering the whole $\R_+$).

Owing to the characteristics of $\o$ and Proposition \ref{p:BS}, it is also plain to establish the claimed property
$u\in L^\infty_\loc\big(\R_+;L^{q_0}(\R^2)\cap LL(\R^2)\big)$, together with the weak-$*$ convergence of the sequence $\big(u^n\big)_{n\in\N}$ in that space
(without a further extraction, by uniqueness of the weak limit), thanks to the linearity of the Biot-Savart law.
\end{proof}

Let us now show the strong convergence (in a suitable space) of the sequence of the velocity fields.
Observe that, owing to Corollary \ref{c:unif} and Lemma \ref{l:d_t-o^n},
it would be easy to deduce strong convergence of $\big(m^n\big)_{n\in\N}$ towards $m$ in \tsl{e.g.} $L^2_\loc\big(\R_+\times\R^2\big)$; however, since 
passing to the limit in the vorticity equation requires strong convergence of the velocity fields anyway, we focus
on the strong convergence of the $u^n$'s only.

\begin{prop} \label{p:conv_strong}
There exists $\wtilde p\in\,]2,+\infty[\,$ such that, up to a further extraction (omitted to be noted here), 
for any time $T>0$ and any compact set $K\subset\R^2$ fixed, one has the strong convergence
\[
 u^n\,\longrightarrow\,u\qquad\qquad \mbox{ in }\qquad \mc C\big([0,T];L^{\wtilde p}(K)\big)
\]
in the limit $n\to +\infty$.
\end{prop}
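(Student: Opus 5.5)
The strategy is an Aubin--Lions/Ascoli argument for $(u^n)_{n\in\N}$ on $[0,T]\times K$, with the uniform bounds of Corollary~\ref{c:unif} providing space compactness and Lemma~\ref{l:d_t-o^n} providing time equicontinuity.

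\textbf{Space compactness.} By Corollary~\ref{c:unif} and the Calder\'on--Zygmund argument in the proof of Lemma~\ref{l:pressure}, the sequence $(u^n)_{n\in\N}$ is bounded in $L^\infty_T(W^{1,q_0})$. On a ball $B\supset K$, Rellich's theorem gives the compact embedding $W^{1,q_0}(B)\hookrightarrow L^{\wtilde p}(B)$ for every $\wtilde p<+\infty$; for definiteness we take $\wtilde p=q_0\in\,]2,+\infty[\,$. The uniform log-Lipschitz bound would also do, through Ascoli in $\mc C(\overline B)$.

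\textbf{Time regularity.} We apply the Biot--Savart operator to the bound of Lemma~\ref{l:d_t-o^n}. Writing
\[
\d_tu^n\,=\,-\nabla^\perp(-\Delta)^{-1}\d_t\o^n\,=\,-\nabla^\perp(-\Delta)^{-1}\big(-\alpha\,\Delta m^n-\div(\o^nu^n)\big)\,,
\]
we get the sum of $-\alpha\nabla^\perp m^n$, bounded in $L^2_T(L^2)$, and of a zeroth-order Fourier multiplier applied to $\o^nu^n$, bounded in $L^\infty_T(L^2)$ since $\o^nu^n\in L^a\cap L^\infty$ and $a\le 2$. This is precisely the computation of Lemma~\ref{l:d_tu}: combining the bound on $\nabla\Pi^n$ from Lemma~\ref{l:pressure} with the bounds on $(u^n\cdot\nabla)u^n$ and on $\nabla m^n$ yields
\[
(\d_tu^n)_{n\in\N}\,\sqsubset\,L^2_T(L^2)\,.
\]
Consequently
\[
\|u^n(t)-u^n(s)\|_{L^2(B)}\,\le\,|t-s|^{1/2}\,\|\d_tu^n\|_{L^2_T(L^2)}\,,
\]
so the family is uniformly equicontinuous in $\mc C([0,T];L^2(B))$. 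Interpolating with the uniform $L^\infty$ bound gives equicontinuity in $\mc C([0,T];L^{\wtilde p}(B))$ as well.

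\textbf{Conclusion.} The Aubin--Lions--Simon lemma, in its $\mc C([0,T];X)$ version with $W^{1,q_0}(B)\Subset L^{\wtilde p}(B)\hookrightarrow L^2(B)$, gives relative compactness of $(u^n)_{n\in\N}$ in $\mc C([0,T];L^{\wtilde p}(B))$. Taking $T=j$ and $B=B(0,j)$ for $j\in\N$ and extracting diagonally, we obtain a single subsequence converging strongly on every $[0,T]\times K$. The limit must coincide with $u$, since strong convergence implies distributional convergence and $u^n\stackrel{*}{\rightharpoonup}u$ by Proposition~\ref{p:converg}.

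\textbf{Main obstacle.} The delicate step is the time bound. Two points need checking: that the pressure and nonlinear terms are bounded uniformly in $n$ in $L^2$ (not merely in negative Sobolev spaces), which depends on $a\le2$ and on $q_0>2$; and that the gradient loss from $\nabla^\perp m^n$ is controlled only in $L^2$ in time, which is why we work with $\mc C^{0,1/2}$ in time rather than with Lipschitz continuity.
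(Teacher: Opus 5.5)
Your proof is correct, but it gets time equicontinuity by a different route from the paper.

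The paper never bounds $\d_tu^n$ in a Lebesgue space. It tests $\d_tu^n$ against $\psi$ and moves the Biot--Savart operator onto the test function, $\lan\d_tu^n,\psi\ran=\lan\d_t\o^n,-(-\Delta)^{-1}\curl\psi\ran$. It then uses only the $L^\infty_T(W^{-2,p_0}\cap W^{-2,\infty})$ bound of Lemma~\ref{l:d_t-o^n}. The Sobolev and Hardy--Littlewood--Sobolev bookkeeping needed to place $-(-\Delta)^{-1}\curl\psi$ in $W^{2,p_0'}$ or $W^{2,q_0}$ forces a case split: $\wtilde p=p_0'$ for $p_0\le 4/3$ and $\wtilde p=q_0$ for $p_0>4/3$. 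The result is a Lipschitz-in-time bound with values in $W^{-2,\wtilde p}$, which is combined with $L^\infty_T(W^{1,\wtilde p})$ and Ascoli.

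You instead make Lemma~\ref{l:d_tu} quantitative and obtain $(\d_tu^n)_{n\in\N}\sqsubset L^2_T(L^2)$. This uses the dissipation bound $\nabla m^n\in L^2_T(L^2)$ from Lemma~\ref{l:uniform} and $L^2$-boundedness of the zeroth-order multiplier acting on $\o^nu^n$. Your direct Biot--Savart computation already gives this, so the pressure bound is not even needed.

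What each route buys:
\begin{itemize}
\item Your route only yields $\mc C^{0,1/2}$ in time and relies on the parabolic gain for $m^n$. The paper's route yields Lipschitz continuity in time and uses only the $L^\infty_T$ Lebesgue bounds on $(\o^n,m^n)$.
\item In exchange, your route avoids negative-order spaces, the duality computation and the case distinction. It gives the convergence for every $\wtilde p\in\,]2,+\infty[\,$ at once, via $W^{1,q_0}(B)\Subset L^{\wtilde p}(B)$ and Simon's lemma.
\end{itemize}
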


\begin{proof}
Let $T>0$ be a given time, which can be chosen arbitrarily but which will be kept fixed throughout the argument below.

To derive the sought compactness properties for $\big(u^n\big)_{n\in\N}$, we first observe that, owing to Corollary \ref{c:unif} and the fact that
$p_0<2<q_0<+\infty$, one has 
\begin{equation} \label{ub:u^n-space}
\big(u^n\big)_{n\in\N}\,\sqsubset\,L^\infty\big([0,T];W^{1,q}(\R^2)\big)\qquad\quad \mbox{ for any }\quad q\in[q_0,+\infty[\;.
\end{equation}

Next, we claim that there exists $\wtilde p\in\,]2,+\infty[\,$ such that one also has the property
\begin{equation} \label{ub:d_tu^n}
\big(\d_tu^n\big)_{n\in\N}\,\sqsubset\,L^\infty\big([0,T];W^{-2,\wtilde p}(\R^2)\big)\,.
\end{equation}
To prove this claim, we are going to use Lemma \ref{l:d_t-o^n}; as the time variable will play no special role, let us simply focus on space
regularity in our discussion.

Assume for a while that $1<p_0\leq 4/3$: in this case, we take $\wtilde p\,=\,p_0'$, where, for any $p\in[1,+\infty]$,
we denote by $p'$ its conjugate Lebesgue exponent (namely, $1/p'\,=\,1\,-\,1/p$).
As a matter of fact, using the Biot-Savart law \eqref{eq:BS},
for a vector field $\psi\in W^{2,p_0}(\R^2)$, we can write (in the sense of distributions) the duality product
\begin{align*}
\left\lan\d_tu^n\,,\,\psi\right\ran\,&=\,\left\lan-\d_t\nabla^\perp(-\Delta)^{-1}\o^n\,,\,\psi\right\ran \\
&=\,\left\lan\d_t\o^n\,,\,-(-\Delta)^{-1}\curl(\psi)\right\ran\,.
\end{align*}
Now, since $\psi\in W^{2,p_0}(\R^2)\,\hookrightarrow\,W^{1,q_0}(\R^2)$, with $1/q_0\,=\,1/p_0\,-\,1/2$ (as given in Proposition \ref{p:BS}),
we notice that $-(-\Delta)^{-1}\curl(\psi)$ belongs to $W^{2,q_0}(\R^2)$. By our assumption
$1<p_0\leq 4/3$, we have that $2<q_0\leq p_0'$, so the term in the last line of the previous series of
equalities is well-defined owing to Lemma \ref{l:d_t-o^n}. Thus, in this case \eqref{ub:d_tu^n} is satisfied with $\wtilde p=p_0'$.

Let us now focus on the proof of the claim \eqref{ub:d_tu^n} for $4/3<p_0<2$. Observe that, in this case, one has $2<p_0'<4$. 
Next, we remark that, by Sobolev embeddings in $\R^2$ and similarly to what just done above, for any index $p_1\in\,]1,4/3[\,$ one has
$W^{2,p_1}(\R^2)\,\hookrightarrow\,W^{1,q_1}(\R^2)$,
where $q_1$ is given by the formula $1/q_1\,=\,1/p_1\,-\,1/2$; remark that $q_1\in\,]2,4[\,$, precisely as $p_0'$.
At the same time, we notice that, owing to Proposition \ref{p:BS}, for a vector field
$\psi\in L^{p_1}(\R^2)$, one has $-(-\Delta)^{-1}\curl(\psi)\in L^{q_1}(\R^2)$, where $q_1$ is precisely the index fixed above. It is then a matter of fixing
$p_1$ so that $q_1 = p_0'$, which in turn reduces to the condition
\[
 \frac{1}{p_1}\,=\,\frac{3}{2}\,-\,\frac{1}{p_0}\,.
\]
Indeed,  with this choice we see that, for $\psi\in W^{2,p_1}(\R^2)$, one has $-(-\Delta)^{-1}\curl(\psi)\in W^{2,p_0'}$, so the duality pairing
$\left\lan\d_t\o^n\,,\,-(-\Delta)^{-1}\curl(\psi)\right\ran$ makes sense owing to Lemma \ref{l:d_t-o^n} again.
Then, the claim is proven by choosing $\wtilde p = p_1' = q_0$, where, as usual in this work, $q_0$ is the index related to $p_0$ through Proposition \ref{p:BS}.

All in all, combining \eqref{ub:u^n-space} and \eqref{ub:d_tu^n} together and keeping in mind the above choice of the index $\wtilde p$,
we find either that
\begin{align*}
\mbox{ if }\ 1\,<\,p_0\,\leq\,\frac{4}{3}\,,\qquad\qquad \big(u^n\big)_{n\in\N}\,\sqsubset\,L^\infty\big([0,T];W^{1,p_0'}(\R^2)\big)\,\cap\,
W^{1,\infty}\big([0,T];W^{-2,p_0'}\big),
\end{align*}
because $q_0\leq p_0'$ when $p_0$ is in that range, or that
\begin{align*}
\mbox{ if }\ \frac{4}{3}\,<\,p_0\,<\,2\,,\qquad\qquad \big(u^n\big)_{n\in\N}\,\sqsubset\,L^\infty\big([0,T];W^{1,q_0}(\R^2)\big)\,\cap\,
W^{1,\infty}\big([0,T];W^{-2,q_0}\big)\,,
\end{align*}
since, as remarked above, one has $\wtilde p = q_0$ for $p_0\in\,]4/3,2[\,$. 
Observe that, in these last two uniform embedding properties (more precisely, when speaking about the Lipschitz continuity in time of the sequence of the velocity fields),
we have implicitly used that $u_0\in L^{q_0}\cap L^\infty$, according to Proposition \ref{p:BS}. Indeed, in particular $u_0\in W^{-2,q_0}$;
in addition, when $1<p_0<4/3$, one has $q_0\leq p_0'$, so $u_0$ belongs also to $L^{p_0'}$, hence to $W^{-2,p_0'}$. This implies, of course, that the sequence
$\big(u_{0,n}\big)_{n\in\N}$ is uniformly bounded in those spaces.

This having been clarified, the sought strong convergence of the velocity fields (up to extraction of a subsequence) follows from the compact embedding
$W^{1,p}(K)\,\hookrightarrow\hookrightarrow\,L^p(K)$ for any compact set $K\subset\R^2$ and $p\in\,]1,+\infty[\,$, the application of the Ascoli-Arzel\`a theorem
and a standard use of a diagonalisation argument.
\end{proof}

With the weak convergence properties from Proposition \ref{p:converg} and the strong convergence of the velocity fields from Proposition \ref{p:conv_strong},
it is an easy matter to pass to the limit $n \to +\infty$ in the weak formulation of equations \eqref{eq:microp_n}.
This argument in turn proves
that the couple $\big(u,m\big)$, identified in Proposition \ref{p:converg}, solves system \eqref{eq:2D-microp} with initial datum the given
$\big(u_0,m_0\big)$.

We omit the details here. We simply point out that the weak formulation of the equation of $u^n$ requires to use divergence-free test-functions
$\psi\in\mc D\big(\R_+\times\R^2\big)$; then, the pressure gradient $\nabla\Pi$ can be recovered by standard distribution theory and the use of Lemma \ref{l:pressure}.
We also observe that, alternatively, using the same Lemma \ref{l:pressure}, one could instead derive uniform bounds for the family of pressure gradients
$\big(\nabla\Pi^n\big)_{n\in\N}$ and pass to the limit in the ``classical'' weak formulation of the equations,
where one drops the condition $\div \psi=0$ on the test-fuction.

\medbreak
The proof of the existence part of Theorem \ref{th:global-wp} is then completed.

\section{Proof of uniqueness} \label{s:uniqueness}

In this section, we carry out the proof of the uniqueness statement in Theorem \ref{th:global-wp}. In particular, performing this step will complete the proof of
the whole theorem.

Throughout this section, we assume the initial datum $\big(u_0,m_0\big)$ to be given and fixed, and to satisfy the assumptions of Theorem \ref{th:global-wp}.
We assume to have two solutions $\big(u_1,m_1\big)$ and $\big(u_2,m_2\big)$ associated to it and enjoying the properties listed in the same statement.
We stress the fact that the initial datum is the \emph{same} for both solutions. We will adopt the notation
\[
 \de u\,:=\,u_1\,-\,u_2\qquad\qquad \mbox{ and }\qquad\qquad \de m\,:=\,m_1\,-\,m_2\,.
\]
Notice that, if we denote (for $j=1,2$) by $\o_j\,:=\,\curl(u_j)$ the vorticity of the velocity field $u_j$, one has that
\[
\de\o\,:=\,\o_1\,-\,\o_2\,\equiv\,\curl(\de u)\,.
\]

Our proof of uniqueness relies on the same strategy used by Yudovich in his classical work \cite{Yud_1963}; we refer to \tsl{e.g.} Chapter 8 of \cite{Maj-Bert}
for details.
More precisely, we will perform an energy estimate (that is, a $L^2$ estimate) for the difference of the velocity fields $\de u$ and of the microrotation fields
$\de m$. The novelty here is that we have now to control possible growth due to the forcing terms appearing on the right-hand side of \eqref{eq:2D-microp};
however, these terms being linear, this will not be especially involved.

Notice that it does \emph{not} follow from our assumptions on the initial datum that the velocity field $u_j$, for $j=1,2$, belongs to $L^2$, see also Proposition
\ref{p:BS} in this respect. Thus, in order to carry out the plan described above, we need first to rigorously justify that we can indeed perform
an energy estimate on the difference $\de u$: this is the goal of the next statement.

\begin{lemma} \label{l:energy}
Let $p_0\in\,]1,2[\,$ and take two sets $\big(u_{0,j}, m_{0,j}\big)_{j=1,2}$ of initial data such that, for any $j\in\big\{1,2\big\}$, one has 
$\div u_{0,j}\,=\,0$ and, after defining $\o_{0,j}\,:=\,\curl\big(u_{0,j}\big)$,
\[
 \o_{0,j}\,,\,m_{0,j}\;\in\;L^{p_0}(\R^2)\,\cap\,L^\infty(\R^2)\,.
\]
Assume moreover that 
\[
 \de u_0\,:=\,u_{0,1}\,-\,u_{0,2}\;\in\,L^2(\R^2)\qquad\quad \mbox{ and }\qquad\quad \de m_0\,:=\,m_{0,1}\,-\,m_{0,2}\;\in\,H^1(\R^2)\,.
\]
Let $\big(u_j,m_j\big)_{j=1,2}$ be respective solutions related to those sets of initial data defined on a common time interval $[0,T]$,
for some $T>0$, and assume that they satisfy
\begin{align*}
& \o_j:=\curl(u_j)\,,\,m_j\;\in\;L^\infty\big([0,T];L^{p_0}(\R^2)\,\cap\,L^\infty(\R^2)\big)\,, \\
&\qquad\qquad\qquad \nabla m_j\,\in\,L^2\big([0,T];L^2(\R^2)\big)\,,
 \qquad u_j\,\in\,L^\infty\big([0,T];L^{q_0}(\R^2)\,\cap\,LL(\R^2)\big)\,,
\end{align*}
where the index $q_0$ is defined as $1/q_0\,=\,1/p_0\,-\,1/2$.

Then, the difference $\de u\,:=\,u_1-u_2$ belongs to the space $W^{1,2}\big([0,T];L^2(\R^2)\big)$, while the difference $\de m\,:=\,m_1-m_2$ belongs
to $\mc C\big([0,T];L^2(\R^2)\big)$. In addition,
for $f\,\in\,\big\{\de u, \de m\big\}$ and for any $t\in[0,T]$, one has the equality
\[
 \frac{\dd}{\dt}\left\|f(t)\right\|_{L^2}^2\,=\,2\,\int_{\R^2}\d_tf(t,x)\cdot f(t,x)\,\dx\,.
\]

\end{lemma}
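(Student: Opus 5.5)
The plan is to handle $\de u$ and $\de m$ separately, in each case writing the equation for the difference and checking that every term lies in $L^2_T(L^2)$, or in the dual of the energy space. The identity for $\frac{\dd}{\dt}\|f\|_{L^2}^2$ will then follow from standard results for functions of the form $W^{1,2}([0,T];L^2)$, or $L^2_T(H^1)$ with $\d_t f\in L^2_T(H^{-1})$ (Lions--Magenes).

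\emph{Step 1: $\de u$.} By Lemma \ref{l:d_tu}, applied to each solution (Lemma \ref{l:pressure} together with $\nabla m_j\in L^2_T(L^2)$), we have $\d_tu_j\in L^2_T(L^2)$. Hence $\d_t\de u\in L^2_T(L^2)$. The only issue is that $u_j$ itself is not in $L^2$. We write
\[
\de u(t)\,=\,\de u_0\,+\,\int_0^t\d_t\de u(\t)\,\dd\t\,,
\]
which holds in $\mc S'$ because both $u_j$ are continuous in time with values in distributions, thanks to $\d_tu_j\in L^2_T(L^2)$. The right-hand side belongs to $\mc C([0,T];L^2)$, since $\de u_0\in L^2$ by assumption. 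Therefore $\de u\in W^{1,2}([0,T];L^2)$. For such functions, $t\mapsto\|\de u(t)\|_{L^2}^2$ is absolutely continuous, with derivative $2\int\d_t\de u\cdot\de u$ for a.e. $t$, or everywhere if one works with the continuous representative. This can be seen by mollifying in time, or by using the polarization identity in the Hilbert space $L^2$.

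\emph{Step 2: $\de m$.} Subtracting the equations for $m_1$ and $m_2$ gives
\[
\d_t\de m\,=\,\k\Delta\de m\,-\,\div\big(u_1\,\de m\big)\,-\,\div\big(\de u\, m_2\big)\,+\,\alpha\,\de\o\,,
\]
where we used $\div u_j=0$. We check that $\de m\in L^2_T(H^1)$. First, $\nabla\de m\in L^2_T(L^2)$ by assumption. Second, $\de m\in L^\infty_T(L^{p_0}\cap L^\infty)\subset L^\infty_T(L^2)$, because $p_0<2$. Next, we bound each term in $L^2_T(H^{-1})$:
\begin{itemize}
\item $\Delta\de m\in L^2_T(H^{-1})$;
\item $u_1\de m\in L^\infty_T(L^2)$, since $u_1\in L^\infty$ and $\de m\in L^2$;
\item $\de u\,m_2\in L^2_T(L^2)$, since $\de u\in L^\infty_T(L^2)$ by Step 1 and $m_2\in L^\infty$;
\item $\de\o=\curl\de u\in L^\infty_T(H^{-1})$.
\end{itemize}
Thus $\d_t\de m\in L^2_T(H^{-1})$. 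The Lions--Magenes lemma then gives $\de m\in\mc C([0,T];L^2)$ together with the identity, where the integral is read as the $H^{-1}\times H^1$ duality. In fact, the equation for $m_j$ only needs to hold in $\mc D'$: it extends to $H^1$ test functions by density, because all terms belong to $L^2_T(H^{-1})$. The initial trace is $\de m(0)=\de m_0$.

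\emph{Main obstacle.} The delicate point is Step 1, namely justifying that $\de u\in L^2$ even though $u_j\notin L^2$. This needs two ingredients:
\begin{itemize}
\item the strong (not just weak) form of the momentum equation from Lemma \ref{l:d_tu}, which requires the pressure bounds of Lemma \ref{l:pressure} with $p_0<2$, giving $\nabla\Pi_j\in L^2$ and $(u_j\cdot\nabla)u_j\in L^2$;
\item the identification of $u_j(t)-u_{0,j}$ with the time integral of $\d_tu_j$, i.e. the fact that the initial datum is attained in the distributional sense.
\end{itemize}

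Step 2 also uses the $L^\infty_T(L^2)$ bound on $\de u$ from Step 1, so the order of the two steps matters.
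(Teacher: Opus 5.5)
Your proof is correct, and for $\de u$ it takes a genuinely different and much shorter route than the paper.

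\textbf{The paper's route for $\de u$.} It views $\de u$ as a solution of the linear transport problem \eqref{eq:de-u} with forcing $g$. It first proves $g\in L^2_T(L^p)$ for every $p\in[2,+\infty[$. This needs parabolic maximal regularity for \eqref{eq:de-m}, and it is the only place where $\de m_0\in H^1$ is used. It then constructs a solution $w\in L^\infty_T(L^2\cap L^{q_0})$ of the transport problem. DiPerna--Lions uniqueness in $L^\infty_T(L^{q_0})$ gives $\de u=w\in L^\infty_T(L^2)$. Only afterwards does it read off $\d_t\de u\in L^2_T(L^2)$ from the equation.

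\textbf{Your route.} You notice that this final reading-off step never needed $\de u\in L^2$. By Lemma \ref{l:d_tu}, each $\d_t u_j$ already lies in $L^2_T(L^2)$. Moreover $u_j\in W^{1,1}([0,T];L^2+L^{q_0})$, so $u_j$ is continuous in time and equals $u_{0,j}$ at $t=0$ by the weak formulation. Hence $\de u(t)=\de u_0+\int_0^t\d_t\de u\,\dd\t$, and $\de u_0\in L^2$ gives $\de u\in W^{1,2}([0,T];L^2)$ at once.

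\textbf{What each buys.} Your argument avoids DiPerna--Lions theory and maximal regularity entirely. It also shows that the hypothesis $\de m_0\in H^1$ is superfluous for this lemma: $\de m_0\in L^2$, which follows automatically from $m_{0,j}\in L^{p_0}\cap L^\infty$, suffices. The paper's heavier route gives by-products: $\nabla\de m\in L^2_T(L^p)$ for all $p<+\infty$, and $\d_t\de m\in L^2_T(L^2)$, so that $\int\d_t\de m\,\de m$ is a genuine integral rather than an $H^{-1}\times H^1$ pairing. These are not needed in the uniqueness argument. Your Step 2 is essentially the paper's Lions--Magenes argument for $m$.

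\textbf{Two minor points.}
\begin{enumerate}
\item Even for the continuous representative, $t\mapsto\|\de u(t)\|_{L^2}^2$ is only differentiable for a.e.\ $t$, because $\d_t\de u$ is merely $L^2$ in time. The identity should therefore be read a.e.\ or in integrated form, and the same caveat applies to the statement itself.
\item Step 2 does not actually depend on Step 1. You have $\de u\cdot\nabla m_2\in L^2_T(L^2)$ using only $\de u\in L^\infty$ and $\nabla m_2\in L^2_T(L^2)$. Also $\de\o\in L^\infty_T(L^2)$ directly, since $\o_j\in L^{p_0}\cap L^\infty$. This is how the paper treats each $m_j$ separately.
\end{enumerate}
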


\begin{proof}
We already know from the assumptions that $m_{0,j}$ belongs to $L^2$, for both $j=1$ and $j=2$, together with the properties
\[
\o_j\,,\, m_j\;\in\;L^\infty\big([0,T];L^2(\R^2)\big)\qquad\quad \mbox{ and }\qquad\quad \nabla m_j\,\in\,L^2\big([0,T];L^2(\R^2)\big)\,.
\]
At the same time, we also know that $u_j$ belongs to $L^\infty\big([0,T]\times\R^2\big)$.
Thus, an inspection of the equation for $m_j$ reveals that
\[
 \d_tm_j\,=\,-\,u_j\cdot\nabla m_j\,+\,\k\,\Delta m_j\,+\,\alpha\,\o_j
\]
belongs to the space $L^2\big([0,T];H^{-1}(\R^2)\big)$. 
From this, standard arguments (see \tsl{e.g.} Chapter 5 of \cite{Evans})
yield the property $m_j\,\in\,\mc C\big([0,T];L^2(\R^2)\big)$, together with the equality
\[
 \frac{\dd}{\dt}\left\|m_j(t)\right\|_{L^2}^2\,=\,2\,\int_{\R^2}\d_tm_j(t,x)\, m_j(t,x)\,\dx\,.
\]
Of course, the same properties are inherited by the difference $\de m$.

Let us now focus on the difference $\de u$ of the velocity fields, which is the most delicate quantity to be considered. First of all,
we remark that $\de u$ solves the transport problem
\begin{equation} \label{eq:de-u}
\d_t\de u\,+\,(u_1\cdot\nabla)\de u\,=\,g\,,\qquad\qquad \mbox{ with }\qquad \de u_{|t=0}\,=\,\de u_0\,,
\end{equation}
where we have defined
\[
 g\,:=\,-\,(\de u\cdot\nabla)u_2\,-\,\nabla\de\Pi\,-\,\alpha\,\nabla^\perp\de m\,,
\]
with $\nabla\de\Pi\,:=\,\nabla\Pi_1\,-\,\nabla\Pi_2$ being the difference of the pressure terms appearing in the equations of $u_1$ and $u_2$.
Observe that, by assumption, $g$ is a known forcing term, as all the differences appearing in its definition can be expressed in terms of the original
solutions $u_j$, $m_j$ and $\nabla\Pi_j$.

We now claim that
\begin{equation} \label{eq:bound-g}
\forall\,p\,\in\,[2,+\infty[\,,\qquad\qquad
 g\,\in\,L^2\big([0,T];L^p(\R^2)\big)\,.
\end{equation}
Indeed, arguing similarly to the proof of Lemma \ref{l:pressure}, it is easy to see that the sum $(\de u\cdot\nabla)u_2\,-\,\nabla\de\Pi$ belongs to those spaces
(as $p_0<2$).
Next, since each $\nabla m_j$ belongs to $L^2_T(L^2)$, we immediately have $\nabla^\perp \de m\,\in\,L^2_T(L^2)$ as well. In order to go above the integrability index
$p=2$, we consider the equation for $\de m$, which we write under the form of a heat equation with forcing term:
\begin{equation} \label{eq:de-m}
 \d_t\de m\,-\,\k\,\Delta\de m\,=\,\alpha\,\de\o\,-\,u_1\cdot\nabla\de m\,-\,\de u\cdot\nabla m_2\,,\qquad\qquad \mbox{ with }\qquad
 \de m_{|t=0}\,=\,\de m_0\,.
\end{equation}
Using that each $\o_j$ belongs to $L^\infty_T(L^{p_0}\cap L^\infty)$ together with the property $u_j\,\in\,L^\infty_T(L^\infty)$ and $\nabla m_j\,\in\,L^2_T(L^2)$ again,
we deduce that the right-hand side of the above equation belongs to the space $L^2_T(L^2)$. At the same time, the initial datum
$\de m_0$ belongs to $H^1$ by assumption. Since $H^1\,\hookrightarrow\,\dot H^1\,=\,\dot B^1_{2,2}$, we can apply maximal regularity estimates
(see \tsl{e.g.} \cite{LR}, see also Section 2B of \cite{D-F-P}) and infer that
\[
 \d_t\de m\,,\,\Delta\de m\;\in\;L^2\big([0,T];L^2(\R^2)\big)\,.
\]
By Sobolev embeddings in space dimension $d=2$, we then get that $\nabla\de m$ belongs to $L^2_T(L^p)$, for any $p\in[2,+\infty[\,$,
which in turn implies the claim \eqref{eq:bound-g}.

With property \eqref{eq:bound-g} at hand, we can solve the linear transport equation \eqref{eq:de-u}, with given initial condition $\de u_0$,
given transport field $u_1$ and given external force $g$. By assumption and Proposition \ref{p:BS}, we know that $u_{0,j}\in L^{q_0}$ for each $j$,
where $q_0\in\,]2,+\infty[\,$ is defined in that proposition. Since $q_0>2$, we also know that $g\in L^2_T(L^{q_0})$.
Therefore, we can solve (for instance, by smoothing out the transport field $u_1$ with respect to the space variable and then applying a compactness argument)
the linear transport problem \eqref{eq:de-u} in $L^2\cap L^{q_0}$ and get a solution $w\in L^\infty\big([0,T];L^2(\R^2)\,\cap\,L^{q_0}(\R^2)\big)$.

Observe that, since $u_1$ is divergence-free and $u_1\in L^\infty_T(H^1_\loc\,\cap\,L^\infty)$, we can apply DiPerna-Lions theory (see
in particular Theorem II.2 of \cite{DiP-L}) to deduce uniqueness of $L^p$ solutions to the linear transport equation \eqref{eq:de-u}.
Now, because $\de u$ is another solution of the same transport problem \eqref{eq:de-u} belonging to the same space $L^\infty\big([0,T];L^{q_0}(\R^2)\big)$, we must have
\[
 \de u\,\equiv\,w\qquad\qquad \mbox{ a.e. on }\quad [0,T]\times\R^2\,.
\]
In turn, since we also have $w\in L^\infty_T(L^2)$, from the above a.e. equality we deduce that
\[
 \de u\,\in\,L^\infty\big([0,T];L^2(\R^2)\big)\,.
\]
Next, since $(u_1\cdot\nabla)\de u$ belongs to $L^\infty_T(L^2)$ by the same token as the one used for the first terms appearing in the definition of $g$,
an inspection of equation \eqref{eq:de-u} yields that
\[
 \d_t\de u\,\in\,L^2\big([0,T];L^2(\R^2)\big)\,,\qquad\qquad \mbox{ thus }\qquad \de u\,\in\,W^{1,2}\big([0,T];L^2(\R^2)\big)\,.
\]

Recalling that $\de\o\in L^\infty_T(L^2)$, we deduce $\de u\,\in\,L^\infty\big([0,T];H^1(\R^2)\big)\,\cap\,W^{1,2}\big([0,T];L^2(\R^2)\big)$.
Then, we can apply again the arguments of Chapter 5 of \cite{Evans} to deduce the equality
\[
 \frac{\dd}{\dt}\left\|\de u(t)\right\|_{L^2}^2\,=\,2\,\int_{\R^2}\d_t\de u(t,x)\cdot \de u(t,x)\,\dx\,.
\]
The proof of the lemma is now complete.
\end{proof}

Lemma \ref{l:energy} plays a crucial role in the proof of the uniqueness statement in Theorem \ref{th:global-wp}, which we can now tackle.
As already mentioned, this relies on an energy estimates for the quantities $\de u$ and $\de m$, combined with the classical
Yudovich uniqueness argument \cite{Yud_1963, A-B-C-DL-G-J-K} (see also \cite{F_2025_Yud} for similar computations).

\begin{proof}[Proof of uniqueness in Theorem \ref{th:global-wp}]
Let $\big(u_0,m_0\big)$ be a given initial datum satisfying the assumptions of Theorem \ref{th:global-wp}.
By contradiction, we assume to have two solutions $\big(u_1,m_1\big)$ and $\big(u_2,m_2\big)$ associated to it and enjoying the properties listed in the same statement.

We stress the fact that the initial datum is the \emph{same} for both solutions.
Therefore, adopting the same notation of Lemma \ref{l:energy} above, one has that
\[
 \de u_0\,\equiv\,0\qquad \mbox{ and }\qquad \de m_0\,\equiv\,0\qquad\qquad \mbox{ a.e. on }\quad \R^2\,.
\]
The assumptions of Lemma \ref{l:energy} are then satisfied. Its conclusions imply in particular that we can perform an energy estimate
in the equations \eqref{eq:de-u} for $\de u$ and \eqref{eq:de-m} for $\de m$. We observe here that all the integration by parts with respect
to the space variable can be rigorously justified by similar arguments as in Subsection 5.2 of \cite{F_2025_Yud}.

For simplicity of notation, here below we are going to set $\alpha=\k=1$. All the integrals appearing below will be space integrals,
therefore we will omit to denote the space domain $\R^2$ in the notation. Finally, for any $t\geq0$, we denote
\[
E(t)\,:=\,\left\|\de u(t)\right\|_{L^2}^2\,+\,\left\|\de m(t)\right\|_{L^2}^2\,.
\]

We start by multiplying equation \eqref{eq:de-u} by $\de u$ to get, for all $t\geq0$, the equality
\[
 \frac{1}{2}\,\frac{\dd}{\dt}\left\|\de u(t)\right\|_{L^2}^2\,=\,-\int(\de u\cdot\nabla)u_2\cdot\de u\,\dx\,-\,\int\nabla^\perp \de m\cdot \de u\,\dx\,,
\]
where we have also used the $L^2$-orthogonality between $\de u$ (which is divergence-free) and $\nabla\de\Pi$.
From the equation \eqref{eq:de-m} multiplied by $\de m$, instead, we find
\begin{align*}
 \frac{1}{2}\,\frac{\dd}{\dt}\left\|\de m(t)\right\|_{L^2}^2\,+\,\left\|\nabla \de m\right\|_{L^2}^2\,=\,\int\de\o\,\de m\,\dx\,-\,\int\de u\cdot\nabla m_2\,\de m\,\dx\,.
\end{align*}
Observe that an integration by parts (which is well justified here, see \cite{F_2025_Yud} again for details) gives
\[
\int\de\o\,\de m\,\dx\,=\,\int \de u^\perp\cdot\,\nabla \de m\,\dx\,=\,-\,\int\de u\cdot\nabla^\perp\de m\,\dx\,.
\]
Thus, gathering the above relations together, we obtain
\begin{align*}
 \frac{1}{2}\,\frac{\dd}{\dt}E(t)\,+\,\left\|\nabla \de m\right\|_{L^2}^2\,&\leq\,\left|\int(\de u\cdot\nabla)u_2\cdot\de u\,\dx\right| \\
 &\qquad\qquad \,+\,
2\,\left|\int\de u\cdot\nabla^\perp\de m\,\dx\right|\,+\,\left|\int\de u\cdot\nabla m_2\,\de m\,\dx\right|\,.
\end{align*}

Now, we focus on the second term on the right-hand side of the previous relation, for which we use the Cauchy-Schwarz inequality together with the Young inequality.
Doing so, we can absorbe the $\nabla\de m$ appearing in second term on the right-hand side
into the left-hand side and infer that
\begin{align*}
\frac{\dd}{\dt}E(t)\,+\,\left\|\nabla \de m\right\|_{L^2}^2\,&\lesssim\,E(t)\,+\,\left|\int(\de u\cdot\nabla)u_2\cdot\de u\,\dx\right|\,+\,
\left|\int\de u\cdot\nabla m_2\,\de m\,\dx\right|\,,
\end{align*}
for an implicit multiplicative constant $C>0$ only depending on $\alpha$ and $\kappa$ (set both equal to $1$ in these computations).
Next, we focus on the last term appearing on the right-hand side of the previous relation. Since $\de u$ is divergence-free and we dispose of enough regularity
to integrate by parts, we can write
\begin{align*}
\left|\int\de u\cdot\nabla m_2\,\de m\,\dx\right|\,&=\,\left|\int\de u\cdot\nabla \de m\, m_2\,\dx\right| \\
&\leq\,\left\|m_2\right\|_{L^\infty}\,\left\|\de u\right\|_{L^2}\,\left\|\nabla \de m\right\|_{L^2}\,.
\end{align*}
Using estimate \eqref{est:G-m_final} for $m_2$, we can bound
\[
 \left\|m_2(t)\right\|_{L^\infty}\,\lesssim\,\left\|\big(\o_0\,,\,m_0\big)\right\|_{L^{p_0}\cap L^\infty}\,e^{C\,t}\,.
\]
Another application of the Young inequality then yields the estimate
\begin{align} \label{est:de-E}
\frac{\dd}{\dt}E(t)\,+\,\left\|\nabla \de m\right\|_{L^2}^2\,&\leq\,C_1\,e^{C\,t}\,E(t)\,+\,C_2\,\left|\int(\de u\cdot\nabla)u_2\cdot\de u\,\dx\right|\,,
\end{align}
for suitable universal multiplicative constants $C$ and $C_2$, not depending on the data nor on the solution, nor on the time $t$, and
another constant $C_1$, depending only on the $L^{p_0}\cap L^\infty$-norm of the initial vorticity $\o_0$ and microrotation field $m_0$.

The bound of the last term on the right-hand side of \eqref{est:de-E} is pretty standard and can be performed as in the classical Yudovich proof. For
completeness, let us sketch the argument. 
Let us recall that, owing to the fact that $\o_2\in L^\infty_T(L^{p_0}\cap L^\infty)$ and to the Calder\'on-Zygmund theory, we have the property
$\nabla u_2\in L^\infty_T(L^p)$ for all $p_0\leq p<+\infty$.
On the other hand, one has $\de u \in L^\infty_T(L^2)$, because of Lemma \ref{l:energy}, and also $\de u\in L^\infty_T(L^\infty)$, as each $u_j$
belongs to this space. Therefore, taken $p>p_0$ large and $q$ such that $1/p\, +\, 1/q\, =\, 1/2$, we can bound
\begin{align*}
\left|\int_{\R^2}(\de u\cdot\nabla) u_2\cdot\de u\,\dx\right|\,\leq\,\left\|\nabla u_2\right\|_{L^p}\,\left\|\de u\right\|_{L^q}\,
\left\|\de u\right\|_{L^2}\,.
\end{align*}
We now use an interpolation argument to control the $L^q$ norm of $\de u$: 
after setting $\theta \,=\,2/q\,=\,1\,-\,2/p$, we can write
\begin{align*}
 \left\|\de u(t)\right\|_{L^q}\,&\leq\,\left\|\de u\right\|_{L^2}^\theta\;\left\|\de u\right\|^{1-\theta}_{L^\infty}\,.
\end{align*}
For the term depending on the $L^\infty$ norm of $\de u$, we can use estimate \eqref{est:u_final}. Observe that this term depends on
$p$ through the exponent $1-\theta$; however, it can be easily bounded uniformly with respect to $p\geq p_0$ by an application
of the Young inequality. Therefore, we get
\[
 \left\|\de u(t)\right\|_{L^q}\,\lesssim\;e^{C\,t}\,\left\|\de u\right\|_{L^2}^\theta\,,
\]
where, now, the implicit multiplicative constant depends also on the $L^{p_0}\cap L^\infty$ norms of $\o_0$ and $m_0$.
Since, owing to the Calder\'on-Zygmund theory, we have
\[
\forall\,p\in\,]1,+\infty[\,,\qquad\qquad \left\|\nabla u\right\|_{L^p}\,\lesssim\,\frac{p^2}{p-1}\,\left\|\o\right\|_{L^p}\,,
\]
in turn we find, thanks to \eqref{est:o_final}, the inequality
\begin{align} \label{est:CZ-term}
\left|\int_{\R^2}(\de u\cdot\nabla) u_2\cdot\de u\,\dx\right|\,\leq\,C_3\,\frac{p^2}{p-1}\,e^{2\,C\,t}\,\left\|\de u\right\|_{L^2}^{1+\theta}\,,
\end{align}
where $C_3$ depends on the Lebesgue norms of the initial data $\o_0$ and $m_0$ and where $\theta\,=\,2/q\,=\,1\,-\,2/p$ as above.
Plugging \eqref{est:CZ-term} into \eqref{est:de-E} then yields, for any $t\geq0$ and for any value of $p\geq p_0$, the bound
\begin{align} \label{est:de-E_p}
 \frac{\dd}{\dt}E(t)\,\leq\,C_0\,e^{2\,C\,t}\,\left(E(t)\,+\,p\,\Big(E(t)\Big)^{1-\frac1p}\right)\,,
\end{align}
for two suitable constants $C_0>0$ and $C>0$, with the former depending on the $L^{p_0}\cap L^\infty$ norm of $\o_0$ and $m_0$ and the latter being
universal (independent of the data and solutions, as well as of the time $t\geq 0$).

In order to treat the presence of the linear term $E(t)$ in the above inequality \eqref{est:de-E_p},
we proceed similarly to Subsection 5.2 of \cite{F_2025_Yud}. To begin with, we define the time $T_0>0$ as
\[
 T_0\,:=\,\sup\left\{t\geq 0\;\Big|\quad E(t)\,\leq\,\frac{1}{2}\right\}\,.
\]
Observe that $\de u$ and $\de m$ are continuous in time with values in $L^2$ (see Lemma \ref{l:energy}),
hence $E$ is a continuous function of time. Since $E(0)=0$ by assumption, $T_0$ is well-defined and one has indeed $T_0>0$.
Without loss of generality, we can assume that $T_0<+\infty$ (if $T_0=+\infty$, simply fix any finite time $T_1>0$).
In light of \eqref{est:de-E_p}, since $p\geq p_0>1$, on the time interval $[0,T_0]$ we get
\begin{equation} \label{est:dE-dt}
 \frac{\dd}{\dt}E(t)\,\leq\,K\,p\,\Big(E(t)\Big)^{1-\frac1p}\,,
\end{equation}
where we have set $K\,:=\,2\,C_0\,e^{2\,C\,T_0}$. Despite the previous inequality has not a unique solution for $p$ fixed, it does possess a maximal solution
\[
 E_{\max}(t)\,=\,\Big(K\,t\Big)^p\,,
\]
so that any other solution of \eqref{est:dE-dt} satisfies $E(t)\,\leq\,E_{\max}(t)$ for all $t\geq0$. We refer to Lemma B.2 of \cite{A-B-C-DL-G-J-K}
for more details. At this point, we define the time $T_*>0$ as
\[
 T_*\,:=\,\min\left\{T_0\,,\,\frac{1}{2K}\right\}\,.
\]
Then, from \eqref{est:dE-dt} and the previous considerations we deduce the bound 
\[
\sup_{t\in[0,T_*]} E(t)\,\leq\,\Big(K\,T_*\Big)^p\,\leq\,2^{-p}\,.
\]
Remark that the previous bound holds true for \emph{any} value of $p\geq p_0$. Then, sending $p\to+\infty$ yields that
$E\equiv0$ on $[0,T_*]$. From this, it is then a routine matter to deduce that
$E\equiv 0$ on the whole time interval $[0,T_0]$ and then on the whole $\R_+$.

This completes the proof of the uniqueness statement in Theorem \ref{th:global-wp}, hence also the proof of the whole theorem.
\end{proof}


%


\end{document}